\theoremstyle{definition}
\newtheorem{theorem}{Theorem}[section]
\newtheorem{lemma}[theorem]{Lemma}
\newtheorem{definition}[theorem]{Definition}
\newtheorem{claim}[theorem]{Claim}
\newtheorem{question}[theorem]{Question}
\newtheorem{conjecture}[theorem]{Conjecture}
\def\EE{{\mathbb E}}
\def\PP{{\mathbb P}}
\def\NN{{\mathbb N}}
\def\ZZ{{\mathbb Z}}
\def\br{\operatorname{br}}
\def\B{\operatorname{B}}
\def\Bin{\operatorname{Bin}}
\def\({\left(}
\def\){\right)}
\newcommand{\closure}[1]{\left\langle #1 \right\rangle}
\newenvironment{proofOfTheorem}[1]{\begin{proof}[\textit{Proof of Theorem \ref{#1}}]}{\end{proof}}
\begin{document}

\title{Bootstrap percolation on Galton--Watson trees}

\author{B{\'e}la Bollob{\'a}s\thanks{Department of Pure Mathematics and Mathematical Statistics, University of Cambridge, Wilberforce Road, Cambridge CB3 0WB, UK, and Department of Mathematical Sciences, University of Memphis, Memphis, Tennessee 38152, USA, and London Institute for Mathematical Sciences, 35a South St, Mayfair, London W1K 2XF, UK. Supported in part by EU project MULTIPLEX no. 317532.}, Karen Gunderson\thanks{Heilbronn Institute for Mathematical Research, School of Mathematics, University of Bristol, Bristol BS8 1TW, UK.}, Cecilia Holmgren\thanks{Department of Mathematics, Stockholm University, 114 18 Stockholm, Sweden, and Department of Pure Mathematics and Mathematical Statistics, University of Cambridge, Wilberforce Road, Cambridge CB3 0WB, UK. Supported in part by the Swedish Research Council.}, Svante Janson\thanks{Department of Mathematics, Uppsala University, SE-75310 Uppsala, Sweden. Supported in part by the Knut and Alice Wallenberg Foundation.}, \\ and Micha{\l} Przykucki\thanks{Department of Pure Mathematics and Mathematical Statistics, University of Cambridge, Wilberforce Road, Cambridge CB3 0WB, UK.}}

\date{April 03, 2013}

\maketitle

\begin{abstract}
\small Bootstrap percolation is a type of cellular automaton which has been used to model various physical phenomena, such as ferromagnetism. For each natural number $r$, the $r$-neighbour bootstrap process is an update rule for vertices of a graph in one of two states: `infected' or `healthy'. In consecutive rounds, each healthy vertex with at least $r$ infected neighbours becomes itself infected. Percolation is said to occur if every vertex is eventually infected.

Usually, the starting set of infected vertices is chosen at random, with all vertices initially infected independently with probability $p$. In that case, given a graph $G$ and infection threshold $r$, a quantity of interest is the critical probability, $p_c(G,r)$, at which percolation becomes likely to occur. In this paper, we look at infinite trees and, answering a problem posed by Balogh, Peres and Pete, we show that for any $b \geq r$ and for any $\epsilon > 0$ there exists a tree $T$ with branching number $\br(T) = b$ and critical probability $p_c(T,r) < \epsilon$. However, this is false if we limit ourselves to the well-studied family of Galton--Watson trees. We show that for every $r \geq 2$ there exists a constant $c_r>0$ such that if $T$ is a Galton--Watson tree with branching number $\br(T) = b \geq r$ then
\[
 p_c(T,r) > \frac{c_r}{b} e^{-\frac{b}{r-1}}.
\]
We also show that this bound is sharp up to a factor of $O(b)$ by giving an explicit family of Galton--Watson trees with critical probability bounded from above by $C_r e^{-\frac{b}{r-1}}$ for some constant $C_r>0$.
\end{abstract}

\small

{\bf \noindent AMS subject classifications}: Primary 05C05, 60K35, 60C05, 60J80; secondary 05C80.

{\bf \noindent Keywords and phrases:} bootstrap percolation; branching number; infinite trees; Galton--Watson trees.

\normalsize

\section{Introduction and results}\label{sec:intro}

Bootstrap percolation, introduced by Chalupa, Leath and Reich
\cite{bootstrapbethe} in 1979, is one of the simplest examples of cellular
automata. Given a graph $G$ and a natural number $r\geq 2$, 
the $r$-\emph{neighbour bootstrap process} can be defined as follows. For any subset of vertices $A \subset V(G)$, set $A_0 = A$, for each $t\geq 1$ let
\begin{equation*}
 A_t = A_{t-1} \cup \{v \in V(G): |N(v) \cap A_{t-1}| \geq r\},
\end{equation*}
where $N(v)$ is the neighbourhood of $v$ in $G$. The \emph{closure} of a set
$A$ is $\closure{A} = \bigcup_{t=0}^{\infty} A_t$.  Often, this process is
thought of as the spread of an `infection' through the vertices of $G$ in
discrete time steps, with the vertices in one of two possible states:
`infected' or `healthy'.  For each $t$, $A_t$ is the set of infected
vertices at time $t$ and $\closure{A}$ is the set of vertices eventually
infected when $A$ is the set of initially infected vertices.  Given a set
$A$ of initially infected vertices, \emph{percolation} or \emph{complete
  occupation} is said to occur if $\closure{A} = V(G)$. 

Bootstrap percolation may be thought of as a monotone version of the Glauber dynamics of the Ising model of ferromagnetism. To mimic the behaviour of ferromagnetic materials, in the classical setup, all vertices of $G$ are assumed to belong to the set $A$ of initially infected vertices independently with probability $p$. It is clear that the probability of percolation is non-decreasing in $p$ and for a finite or infinite graph $G$ one can define the \emph{critical probability}
\begin{equation}
\label{eq:p_c}
p_c(G,r) = \inf \{ p : \PP_p (\closure{A} = V(G)) \geq 1/2 \},
\end{equation}
for which percolation becomes more likely to occur than not. Indeed, much work has been done in this direction for various underlying graphs and values of the infection threshold.

The question of critical probability has been studied extensively in the cases of grid-like and cube-like graphs. For example, Aizenman and Lebowitz \cite{metastabilityeffects} showed that $p_c ([n]^2,2)$ decreases logarithmically with $n$. This was later sharpened by Holroyd \cite{sharpmetastability} who showed that $p_c([n]^2,2) = \frac{\pi^2}{18 \log n} + o(1/\log n)$. Balogh, Bollob{\'a}s, Duminil-Copin and Morris \cite{sharpbootstrapall} generalized Holroyd's result giving a formula for $p_c([n]^d,r)$ for all values of $d$ and $r$. A sharp result for critical probability in $2$-neighbour bootstrap percolation on the hypercube graph was obtained by Balogh, Bollob{\'a}s and Morris \cite{bootstraphigh}.

Other types of graphs have also been studied. Janson, {\L}uczak, Turova
and Vallier \cite{bootstrapgnp} considered the random graph $G_{n,p}$,
Balogh and Pittel \cite{randomregular} worked with random regular graphs,
which were further studied by Janson \cite{givenDegrees}.
Chalupa, Leath and Reich \cite{bootstrapbethe} considered infinite
regular trees, also called Bethe lattices, which have been subsequently
examined by Balogh, Peres and Pete \cite{infiniteTrees}, by Biskup and
Schonmann \cite{metastableTrees} and by Fontes and Schonmann
\cite{homogeneousTrees}. In particular, Balogh, Peres and Pete
\cite{infiniteTrees} built upon the known results concerning bootstrap
percolation on regular trees and investigated more general results on
critical probabilities for infinite trees.  For an infinite tree $T$, the
critical probability for $r$-neighbour bootstrap percolation, denoted
$p_c(T,r)$, is defined as 
\[
p_c(T,r) = \inf\{p \mid \PP_p(T \text{ percolates in $r$-neighbour bootstrap percolation}) > 0\}.
\]
Note that this definition of $p_c(T,r)$ is different from that given in \eqref{eq:p_c}. This modification is motivated by the fact that for a general infinite tree the exact probability of percolation could be highly affected by finite, yet difficult to infect from the outside, subtrees. The existence of such substructures does not matter when we care only about the probability of percolation being positive.

For every $d \geq 1$, let $T_d$ denote the infinite $(d+1)$-regular
tree. Balogh, Peres and Pete \cite{infiniteTrees}, 
expanding the work of Chalupa, Leath and Reich \cite{bootstrapbethe}, 
gave a formula for $p_c(T_d,r)$ showing, in particular, that for any 
$d\ge1$ and $r\ge2$ 
we have $p_c(T_d,r) > 0$. They also showed that every infinite tree $T$ with
\emph{branching number} $\br(T) < r$ has the property that $p_c(T,r)=1$. 
(The branching number is defined in Section \ref{sec:small_crit_prob}.) 
Given these results, the question was raised of finding the smallest critical probability among all trees with a fixed branching number. With a simple example of a Galton--Watson tree it was shown in \cite{infiniteTrees} that for $b \geq r$ a $(b+1)$-regular tree does not, in general, minimize the critical probability for $r$-neighbour bootstrap percolation among all trees with branching number $b$. Defining a function $f_r$, for each $r \geq 2$, by
\begin{equation*}
 f_r(b) = \inf \{p_c(T, r) \mid \br(T) \leq b \text{ and $T$ has bounded degree}\},
\end{equation*}
Balogh, Peres and Pete \cite{infiniteTrees} posed the following two problems:
\begin{enumerate}
 \item Is $f_r(b)$ strictly positive for all real $b \geq 1$?
 \item Is $f_r(b)$ continuous apart from $b = r$?
\end{enumerate}
In this paper we answer both of these questions by showing that $f_r(b)$ is a step-function. More precisely, in Section \ref{sec:small_crit_prob}, we prove the following theorem.
\begin{theorem}
\label{thm:trivialF}
For all $r \geq 2$ and $b \geq r$, $f_r(b) = 0$.
\end{theorem}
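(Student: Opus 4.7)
The plan is to construct, for each $\epsilon > 0$, a bounded-degree tree $T$ with $\br(T) = b$ and $p_c(T, r) < \epsilon$. My first attempt is a spherically symmetric tree: a vertex at level $\ell$ has $c_\ell$ children, with $c_\ell = D$ at a sparse sequence of ``burst'' levels $N_1 < N_2 < \cdots$ (for instance $N_k = 2^k$), and $c_\ell = b$ elsewhere; here $D = D(\epsilon, r, b)$ is a large constant. Since $N_k$ grows exponentially, the bursts contribute only a subexponential correction to the vertex count at depth $n$, so $\br(T) = b$ while the maximum degree is bounded by $D + 1$.

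The heart of the argument is the infection cascade at $p = \epsilon$. Each burst vertex has $D$ children, and for $D$ large enough at least $r$ of these are initially infected with probability $1 - \delta$ for any prescribed small $\delta$, making the burst vertex infected at time $1$. Supposing all burst vertices at level $N_k$ are infected, each non-burst vertex at $N_k - 1$ has all $b \geq r$ of its children (at $N_k$) infected and so is itself infected; iterating this upward propagation fills the entire lull between $N_{k-1}$ and $N_k$, and the burst vertices at $N_{k-1}$ are then also infected via their $D \geq r$ just-infected children at $N_{k-1} + 1$.

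For $b > r$ the upward propagation is robust: a lull vertex at height $h$ above a burst fails only when at least $b - r + 1 \geq 2$ of its $b$ children fail, and the failure probability therefore decays super-exponentially with $h$. A Borel--Cantelli argument over the bursts then yields a positive probability that every vertex is eventually infected, giving $p_c(T, r) \leq \epsilon$.

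\noindent\textbf{Main obstacle.} The hardest case will be $b = r$, where the propagation is tight: a lull vertex needs \emph{all} $r$ of its children infected, so failure probabilities roughly multiply by $r$ at each level and compound along the lull. Because the degree is bounded, the single-burst failure probability $\delta_0$ cannot be made arbitrarily small, so some lulls inevitably become too deep to propagate through in the naive construction. Handling $b = r$ will require a refinement---for example, replacing each single-level burst by a thick $D$-ary ``slab'' whose internal percolation (analysed via the recursion $\beta_h = \epsilon + (1-\epsilon) \PP(\Bin(D, \beta_{h+1}) \geq r)$) can be made essentially complete, and then choosing the slab thicknesses and spacings so that $\br(T) = b$ is preserved while the remaining lulls are short enough for the propagation errors to be summable.
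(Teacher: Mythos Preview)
Your plan is sound and arrives at essentially the same construction as the paper, though via a slightly different route and with one unnecessary complication in the $b=r$ case.

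For $b > r$, your thin-burst construction with $N_k = 2^k$ is a genuine simplification over the paper. The paper does not distinguish $b > r$ from $b = r$: it uses thick $d$-ary slabs throughout (your ``refinement'') for every $b \ge r$. Your observation that for $b > r$ the lull recursion $f_h \le C_b f_{h-1}^{\,b-r+1}$ has exponent $\ge 2$, giving doubly-exponential decay, is correct and lets you get away with single-level bursts. The Borel--Cantelli step works per vertex: for fixed $v$ at level $\ell$, the events ``$v$ is infected via burst $N_k$'' depend only on the initial infections at level $N_k+1$, hence are independent across $k$, and their probabilities tend to $1$. This buys you a simpler tree at the cost of a case split.

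For $b = r$, your refinement (thick $D$-ary slabs) is exactly the paper's construction, but your analysis is over-engineered. You worry about choosing ``lulls short enough for the propagation errors to be summable'', which sits in tension with needing the lulls \emph{long} to force $\br(T)=b$. The tension is resolvable (slab failure decays super-exponentially in slab thickness, so $\log(1/\delta(n_k))$ grows faster than any polynomial in $n_k$), but it is unnecessary. The paper's cleaner observation is this: once an \emph{entire level} is infected, every vertex above it is trivially infected, since each has at least $r$ children. So there is no propagation error through lulls at all. The paper simply chooses the slab thickness $n_\ell$ large enough (depending on the number $N_\ell$ of slab roots at that depth) that $\PP(\text{every slab root at depth } t_\ell \text{ is infected by its own slab}) \ge 1/2$; these events are independent across $\ell$, and the second Borel--Cantelli lemma finishes. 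The lull lengths $m_\ell$ are then chosen freely, as large as needed, to bring the branching number down to $b$. You would simplify your $b=r$ case considerably by adopting this viewpoint.
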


Combining Theorem \ref{thm:trivialF} with the result of Balogh, Peres and
Pete \cite{infiniteTrees}, we have
\begin{equation*}
 f_r(b) = \begin{cases}
1, & \text{if }b < r,\\
0, & \text{otherwise}.
\end{cases}
\end{equation*}

We shall prove Theorem \ref{thm:trivialF} by producing trees with
arbitrarily small critical probabilities.  Motivated by the non-homogeneous
nature of these trees we also study a well--known family of well-behaved
trees: \emph{Galton--Watson trees}. For a non-negative integer--valued distribution
$\xi$, let $T_\xi$ be the Galton--Watson tree with offspring distribution
$\xi$ (a more formal definition is given in Section \ref{sec:GWtrees}). 
 We shall see in Section \ref{sec:GWtrees} that $p_c(T_\xi,r)$ is almost
surely a constant (depending on the distribution $\xi$ but not on the
realization $T_\xi$); we let $p_c(T_\xi,r)$ denote also this constant,
without risk of confusion.
We define a new function $f_r^{GW}\!(b)$ by
\begin{equation}
 \label{eq:frGWb}
 f_r^{GW}\!(b) = \inf \{p_c(T_\xi,r) \mid \EE(\xi)=b, \PP(\xi = 0) = 0\}.
\end{equation}
The condition that $\PP(\xi = 0) = 0$ is included since any finite tree percolates with positive probability if the probability of initial infection, $p$, is positive.  For this reason, we consider only offspring distributions for which the resulting tree is almost surely infinite.  While the branching numbers of infinite trees can be difficult to determine, for Galton--Watson trees, Lyons \cite{walksOnTrees} showed that, almost surely, $\br(T_{\xi}) = \EE(\xi)$. 

In Section \ref{sec:GWtrees}, we shall investigate the function $f_r^{GW}\!(b)$ and we shall show it to be positive for all $b$ and $r$.  That is, the value of $\EE(\xi)$ immediately leads to a non-trivial lower bound on $p_c(T_\xi, r)$. We shall also show that our bound is tight up to a factor of $O(b)$. 
\begin{theorem}
\label{thm:branchingBound}
Let the function $f_r^{GW}\!(b)$ be defined as in \eqref{eq:frGWb}.
\begin{enumerate}
 \item If $r > b \geq 1$ then $f_r^{GW}\!(b) = 1$.
 \item For $r \geq 2$ there are constants $c_r$ and $C_r$ such that if $b \geq r$ then
       \begin{equation*}
        \frac{c_r}{b} e^{-\frac{b}{r-1}} \le f_r^{GW}\!(b) \le C_r e^{-\frac{b}{r-1}}.
       \end{equation*}
\end{enumerate}
\end{theorem}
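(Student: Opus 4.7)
Part (1) is immediate: by Lyons' theorem $\br(T_\xi)=\EE\xi=b$ almost surely, and the Balogh--Peres--Pete result cited in the introduction gives $p_c(T,r)=1$ for any tree $T$ with $\br(T)<r$; hence $p_c(T_\xi,r)=1$ a.s., and $f_r^{GW}(b)=1$.

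For Part (2), the plan is to reduce $p_c(T_\xi,r)$ to a one-dimensional fixed-point analysis. Writing $g(p)$ for the probability that the root $\rho$ is eventually infected and using that trees have no cycles (so any infection chain reaching $\rho$ from below must pass through at least $r$ of its children, each infected within its own independent subtree without $\rho$'s aid) and the branching property of Galton--Watson trees, one obtains
\[
g \;=\; p + (1-p)\,\EE\bigl[\PP(\Bin(\xi,g)\ge r)\bigr].
\]
A standard 0--1 argument gives $\PP(\text{percolation})\in\{0,1\}$, and since $\PP(\text{percolation})\le g$ we get that percolation has positive probability iff $g=1$. This in turn requires $\PP(\xi\ge r)=1$ (so that $F_p(1)=1$, with $F_p(x):=p+(1-p)h(x)$ and $h(x):=\EE\PP(\Bin(\xi,x)\ge r)$) and $F_p(x)>x$ for every $x\in[0,1)$. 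Hence if $\PP(\xi<r)>0$ then $p_c=1$, trivially above the claimed lower bound; otherwise
\[
p_c(T_\xi,r) \;=\; \sup_{x\in(0,1)}\phi(x), \qquad \phi(x) \;:=\; \frac{x-h(x)}{1-h(x)}.
\]

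To prove the lower bound in (2), my plan is to evaluate $\phi$ at a suitable $x_0$, using two complementary choices that together cover all offspring distributions. The first choice, $x_0=1-e^{-1/(r-1)}$, is motivated by the exponential factor in the claimed bound: for this $x_0$ one has $(1-x_0)^\xi=e^{-\xi/(r-1)}$, and isolating the $j=r-1$ summand of $\PP(\Bin(\xi,x_0)\le r-1)$ yields
\[
1-h(x_0) \;\ge\; \bigl(e^{1/(r-1)}-1\bigr)^{r-1}\,\EE\!\Bigl[\binom{\xi}{r-1}\,e^{-\xi/(r-1)}\Bigr],
\]
which together with $\phi(x_0)=1-(1-x_0)/(1-h(x_0))$ handles any $\xi$ for which the expectation above is bounded below. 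The second choice, $x_0$ of order $b^{-r/(r-1)}$, uses the union-bound estimate $h(x_0)\le\EE\binom{\xi}{r}\,x_0^r$ and handles any $\xi$ with $\EE\binom{\xi}{r}$ at most polynomial in $b$, giving $\phi(x_0)\gtrsim b^{-r/(r-1)}$. Both resulting bounds easily exceed $c_r b^{-1}e^{-b/(r-1)}$ in their respective regimes, and I expect that the hardest step will be stitching the two regimes together---in particular, producing a universal lower bound on $\EE[\binom{\xi}{r-1}e^{-\xi/(r-1)}]$ of order $b^{r-1}e^{-b/(r-1)}$ using only $\EE\xi=b$ and $\xi\ge r$: the integrand is unimodal (not convex) on $[r,\infty)$, ruling out a plain Jensen argument, so I would split $\xi$ between a window near $b$ and its tail (the latter controlled by $\PP(\xi>M)\le b/M$).

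For the matching upper bound, my plan is to construct an explicit offspring distribution $\xi^\ast$ with $\EE\xi^\ast=b$ and $\xi^\ast\ge r$ a.s.\ such that $h(x)$ tracks the diagonal $y=x$ within an additive $O_r(e^{-b/(r-1)})$ throughout $(0,1)$, immediately giving $\sup_x\phi(x)\le C_r e^{-b/(r-1)}$. A natural candidate is a distribution supported on geometrically spaced scales, with weights tuned so that for every relevant $x$ some atom $M$ makes $\PP(\Bin(M,x)\ge r)$ close to $1$ and contributes enough mass to push $h(x)$ close to $x$; the bound then follows by a direct computation of $\phi$ for this $\xi^\ast$.
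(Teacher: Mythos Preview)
Your Part (1) is fine, and your reformulation $p_c=\sup_{x\in(0,1)}\phi(x)$ with $\phi(x)=(x-h(x))/(1-h(x))$ is equivalent to the paper's $p_c=1-1/\max G_\xi^r$ after the substitution $x\mapsto 1-x$.

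The lower-bound plan, however, has a real gap. Take $r=2$ and the distribution $\xi\in\{3,N\}$ with mean $b$ and $N$ huge (say $N\ge e^{2b}$). This is a legal offspring distribution with $\PP(\xi<r)=0$. Your second test point fails because $\EE\binom{\xi}{2}\asymp bN$, so the best you get from $x_0-h(x_0)\ge x_0-\EE\binom{\xi}{2}x_0^2$ is of order $1/(bN)\ll b^{-1}e^{-b}$. Your first test point $x_0=1-e^{-1}$ also fails: one computes $\PP(\Bin(3,x_0)\le 1)=(1-x_0)^2(1+2x_0)=e^{-2}(3-2e^{-1})\approx 0.306$, so $1-h(x_0)\approx 0.306<0.368=1-x_0$ and hence $\phi(x_0)<0$. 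Yet for this $\xi$ the true critical probability is a constant close to $1-8/9=1/9$, because the right test point is $x_0=1/4$ (the maximizer for the $3$-ary tree), not $1-e^{-1}$. More generally, the ``hardest step'' you single out, namely $\EE[\binom{\xi}{r-1}e^{-\xi/(r-1)}]\gtrsim b^{r-1}e^{-b/(r-1)}$, is in fact true (the extremum over mean-$b$ distributions supported in $[r,\infty)$ is attained near $\xi\equiv b$), but it does not rescue the argument: plugging that bound into $\phi(x_0)=1-(1-x_0)/(1-h(x_0))$ still leaves $\phi(x_0)$ hugely negative whenever $1-h(x_0)\ll 1-x_0$, which is exactly the regime you need case~1 to cover. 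The underlying obstruction is that the maximizing $x$ for $\phi$ depends on the whole shape of $\xi$, not just on $b$, so no finite list of test points depending only on $b$ and $r$ can work.

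The paper sidesteps this by an integral identity rather than pointwise evaluation: writing $G_\xi^r(x)=(1-h(1-x))/x$ and $g_r^r(x)=\sum_{i=0}^{r-1}(1-x)^i$, one computes exactly
\[
\int_0^1\frac{g_r^r(x)-G_\xi^r(x)}{(1-x)^2}\,dx=\frac{\EE\xi-r}{r-1}+\EE(H_{\xi-r}),
\]
and separately bounds $\int_0^{1-y}\frac{g_r^r(x)-M}{(1-x)^2}\,dx\ge -\log p_c-1$, where $M=\max G_\xi^r$ and $g_r^r(1-y)=M$. Combining, $-\log p_c\le (\EE\xi-1)/(r-1)+\EE(H_{\xi-r})$, which after $\EE(H_{\xi})\le\log b+1$ gives the claimed lower bound. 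For the upper bound the paper finds an explicit distribution $\xi_r$ with $\PP(\xi_r=k)=(r-1)/(k(k-1))$ satisfying $G_{\xi_r}^r\equiv 1$ identically (equivalently $h(x)=x$ for all $x$), and then truncates it at height $\asymp e^{b/(r-1)}$ to get mean $b$; this is both simpler and sharper than a geometric-scales construction.
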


Note that the $b$-ary tree is a Galton--Watson tree given by $\xi$ with $\PP(\xi = b)=1$.  The $b$-ary tree has the same critical probability as the $(b+1)$-regular tree $T_b$.  By Theorem \ref{thm:branchingBound}, for large $b$, the value of $f_r^{GW}\!(b)$ is extremely far from the value $p_c(T_b, b) = 1-\frac{1}{b}$, obtained in \cite{infiniteTrees}. This discrepancy suggests that offspring distributions highly concentrated around their means might yield much higher values for the critical probability. This is in fact true as shown by the following theorem, proved in Section \ref{sec:1+alpha}.
\begin{theorem}
\label{thm:(1+alpha)Bound}
For each $r \geq 2$ and $\alpha \in (0,1]$ there exists a constant
$c_{r,\alpha}>0$   such that for any offspring distribution $\xi$ we have 
\[
 p_c(T_\xi,r) \ge  c_{r,\alpha} \left( \EE(\xi^{1+\alpha}) \right)^{-1/\alpha} .
\]
Also, for each $r \geq 2$ there exists a constant $A_r > 0$ such that
\[
 p_c(T_\xi,r) \leq  \EE \left( \frac{A_r}{\xi^{r/(r-1)}} \right).
\]
\end{theorem}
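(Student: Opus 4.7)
The plan is to deduce both bounds from the fixed-point equation governing bootstrap percolation on a Galton--Watson tree. Let $q=q(p)$ denote the probability that the root of $T_\xi$ is eventually infected by the bootstrap process restricted to the subtree rooted at it. Conditioning on whether the root is initially infected, and using that until the root is itself infected it cannot help any of its children, the children's subtree-infections are independent copies of $q$, giving
\[
q \;=\; F_p(q) \;:=\; p+(1-p)\,\EE\!\left[f_r(\xi,q)\right], \qquad f_r(k,q):=\PP\!\left(\Bin(k,q)\ge r\right).
\]
The tree percolates precisely when iterating $F_p$ from $0$ reaches $1$; equivalently,
\[
p_c(T_\xi,r) \;=\; \sup_{q\in[0,1)}\frac{q-\EE f_r(\xi,q)}{1-\EE f_r(\xi,q)}.
\]

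\textbf{Lower bound.} To prove $p_c(T_\xi,r)\ge c_{r,\alpha}(\EE\xi^{1+\alpha})^{-1/\alpha}$ it suffices to exhibit some $q^\star\in(0,1)$ with $F_p(q^\star)\le q^\star$ when $p$ is at most the claimed threshold, since by monotonicity of $F_p$ the least fixed point is then trapped inside $[0,q^\star]$ and percolation fails. The key estimate is
\[
\EE f_r(\xi,q) \;\le\; C_{r,\alpha}\,\EE(\xi^{1+\alpha})\,q^{1+\alpha},
\]
which I prove by truncating $\xi$ at $K\asymp 1/q$: on $\{\xi<K\}$ use the union bound $f_r(k,q)\le k^rq^r/r!$ together with $k^{r-1-\alpha}\le K^{r-1-\alpha}$; on $\{\xi\ge K\}$ combine $f_r\le 1$ with the Markov estimate $\PP(\xi\ge K)\le K^{-(1+\alpha)}\EE\xi^{1+\alpha}$. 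Summing the two contributions gives the bound. Now take $q^\star=(2C_{r,\alpha}\EE\xi^{1+\alpha})^{-1/\alpha}$, so that $\EE f_r(\xi,q^\star)\le q^\star/2$; for any $p\le q^\star/2=c_{r,\alpha}(\EE\xi^{1+\alpha})^{-1/\alpha}$ we then obtain $F_p(q^\star)\le p+q^\star/2\le q^\star<1$, as required.

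\textbf{Upper bound.} For the companion estimate $p_c(T_\xi,r)\le\EE[A_r/\xi^{r/(r-1)}]$, the starting point is the single-type bound on a $k$-ary tree: optimising the ratio at $q_0=\gamma_r/k^{r/(r-1)}$ with $\gamma_r=((r-1)!)^{1/(r-1)}$ shows that for each integer $k\ge r$,
\[
\frac{q-f_r(k,q)}{1-f_r(k,q)}\;\le\;\frac{A_r}{k^{r/(r-1)}}\qquad\text{for all}\ q\in[0,1),
\]
with $A_r=(r-1)\gamma_r/r$. Rearranging and averaging in $\xi$ gives
\[
q-\EE f_r(\xi,q)\;\le\;A_r\,\EE\!\left[\xi^{-r/(r-1)}\bigl(1-f_r(\xi,q)\bigr)\right].
\]
The main obstacle will be to convert this into the ratio bound $(q-\EE f_r)/(1-\EE f_r)\le\EE[A_r\xi^{-r/(r-1)}]$, because a naive FKG-type step would require $\xi^{-r/(r-1)}$ and $f_r(\xi,q)$ to be positively correlated in $\xi$, whereas they are monotone in opposite directions. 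I plan to work around this by splitting the $\xi$-range at a threshold $k_0=k_0(q)$ chosen so that $\Bin(\xi,q)$ is typically well above $r$ on $\{\xi>k_0\}$ (where $1-f_r(\xi,q)$ is therefore small), while on $\{\xi\le k_0\}$ the single-type inequality can be applied with $\xi^{-r/(r-1)}$ controlled by a deterministic bound; optimising the threshold and absorbing the resulting numerical factors into $A_r$ should then produce the claimed expectation bound.
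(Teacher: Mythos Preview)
Your argument is correct and is genuinely different from the paper's. The paper establishes the lower bound by computing the integrals
\[
\int_0^1 \frac{g_r^r(x)-G_\xi^r(x)}{(1-x)^{2+\alpha}}\,dx
\]
via the beta function and Gautschi's inequality for ratios of gamma functions, and then bounds the same integral from below in terms of the location of $\max_x G_\xi^r(x)$; the case $r=2$, $\alpha=1$ is handled separately (Theorem~\ref{thm:2ndMomentBound}). Your truncation argument --- splitting at $K\asymp 1/q$ and combining the union bound $f_r(k,q)\le (kq)^r/r!$ with Markov's inequality --- is shorter, avoids special functions, and gives the full range $\alpha\in(0,1]$ (including $r=2$) in one stroke. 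What your approach loses is any tracking of the constant $c_{r,\alpha}$, whereas the paper's integral method yields more explicit constants.

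\textbf{Upper bound.} Here there is a real gap. You correctly identify the obstruction: after averaging the single-type inequality you obtain
\[
q-\EE f_r(\xi,q)\;\le\;A_r\,\EE\!\bigl[\xi^{-r/(r-1)}(1-f_r(\xi,q))\bigr],
\]
and passing from this to $\bigl(q-\EE f_r\bigr)\big/\bigl(1-\EE f_r\bigr)\le\EE\!\bigl[A_r\xi^{-r/(r-1)}\bigr]$ would require $\xi^{-r/(r-1)}$ and $f_r(\xi,q)$ to be positively correlated, which they are not. The proposed threshold-splitting fix is vague, and there is no reason to expect it to recover the needed inequality without losing a factor that grows with the spread of $\xi$.

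The paper sidesteps this entirely by changing the denominator. In your notation, note that whenever the numerator $q-\EE f_r(\xi,q)$ is positive one has $1-\EE f_r(\xi,q)\ge 1-q$, and hence
\[
p_c(T_\xi,r)\;=\;\sup_{q\in[0,1)}\frac{q-\EE f_r(\xi,q)}{1-\EE f_r(\xi,q)}\;\le\;\sup_{q\in[0,1)}\frac{q-\EE f_r(\xi,q)}{1-q}.
\]
Now the single-type estimate in the form
\[
\frac{q-f_r(k,q)}{1-q}\;\le\;\frac{A_r}{k^{r/(r-1)}}\qquad\text{for all }q\in[0,1)
\]
(this is exactly $\max_x g_k^r(x)-1\le A_r k^{-r/(r-1)}$, which is what Lemma~\ref{lem:pc_regtree} actually proves) has a \emph{deterministic} denominator, so taking expectations in $\xi$ is linear and gives immediately
\[
\frac{q-\EE f_r(\xi,q)}{1-q}\;\le\;\EE\!\left[\frac{A_r}{\xi^{r/(r-1)}}\right],
\]
hence the claim. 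So the fix is not a delicate correlation argument but simply to work with $1-q$ rather than $1-f_r(\cdot,q)$ in the denominator.
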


The lower bound in Theorem \ref{thm:(1+alpha)Bound} is proved directly for
$\alpha \in (0,1)$. For $r \geq 3$ the constants $c_{r, \alpha}$ obtained in the theorem converge to 
$c_r > 0$ as $\alpha \to 1$ and hence by continuity, the theorem holds 
for $r \geq 3$ and $\alpha = 1$. For $r = 2$ and $\alpha = 1$ the theorem
holds by the final result in this paper, given in Section
\ref{sec:lb_2nbr}. There, we prove the following theorem which, apart from a
sharp lower bound on $p_c(T_{\xi}, 2)$ based on the second moment of $\xi$,
 also gives additional lower bounds on the critical probability in
$2$-neighbour bootstrap percolation, as well as a sharp upper bound on
$p_c(T_{\xi}, 2)$ based on the second negative moment of $\xi$. 
\begin{theorem}
\label{thm:2ndMomentBound}
Let $T_\xi$ be the Galton--Watson tree of an offspring distribution $\xi$. Then
\begin{equation}
\label{eq:dominationBound}
 p_c(T_{\xi}, 2) \geq \max \left \{1 - \frac{1}{2 \PP(\xi=2)}, \max_{k \geq
   3} \left \{ 1 - \frac{(k-1)^{2k-3}}{k^{k-1}(k-2)^{k-2} \PP(\xi=k)} \right
 \} \right \},
\end{equation}
and 
\begin{equation}\label{eq:dominationUpperbd}
 p_c(T_{\xi}, 2) \leq \EE\left(\frac{1}{(\xi-1)(2\xi-3)}\right)
\le \EE\left(\frac {4}{\xi^2} \right).
\end{equation}

Additionally, if $\xi$ has the property that $\EE(\xi^2) < \infty$, then
\begin{equation}
\label{eq:2ndMomentBound}
 p_c(T_{\xi}, 2) \geq \frac{1}{2 \EE(\xi(\xi-1)) - 3}
\ge \frac 1{2\EE \xi^2}.
\end{equation}
\end{theorem}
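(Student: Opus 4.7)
The engine of the proof is the \emph{upward passage probability} $q = q(p)$: the probability that a vertex of $T_\xi$ becomes infected in the bootstrap process restricted to the subtree rooted at it (ignoring the edge to its parent). Self-similarity of the Galton--Watson tree yields the fixed-point equation
\[
 q = p + (1-p)\, G(q), \qquad G(q) := 1 - \EE\bigl[(1-q)^{\xi-1}(1+(\xi-1)q)\bigr],
\]
and iterating $F(q) := p + (1-p) G(q)$ from $q_0 = 0$ shows that a.s.\ percolation occurs iff $q = 1$ is the smallest fixed point of $F$, whence
\[
 1 - p_c(T_\xi, 2) \;=\; \inf_{q \in (0,1)} \frac{1-q}{1-G(q)} \;=\; \inf_{q \in (0,1)} \frac{1}{\EE[H_\xi(q)]}, \qquad H_\xi(q) := (1-q)^{\xi-2}(1+(\xi-1)q).
\]

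For \eqref{eq:dominationBound}, fix $k \geq 2$ and use $1 - G(q) \geq \PP(\xi = k)\bigl(1 - G_k(q)\bigr)$, where $G_k(q) := \PP(\Bin(k, q) \geq 2)$, obtained by estimating the remaining offspring contributions by $G_j(q) \leq 1$. Then $\inf_q (1-q)/(1 - G(q)) \leq \bigl(\PP(\xi=k)\,\max_q H_k(q)\bigr)^{-1}$, and a direct differentiation gives $\max_q H_k(q) = H_k(1/(k-1)^2) = k^{k-1}(k-2)^{k-2}/(k-1)^{2k-3}$ for $k \geq 3$ (and $\lim_{q \to 1} H_2(q) = 2$ for $k = 2$), producing the stated constants. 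For \eqref{eq:dominationUpperbd}, I would prove the elementary calculus inequality $\max_q H_k(q) \leq 1 + 1/((k-1)(2k-3))$ for each $k \geq 2$ (equality at $k = 2$), take expectation in $\xi$, and apply $1/(1+x) \geq 1 - x$ to conclude $p_c \leq \EE[1/((\xi-1)(2\xi-3))]$. The secondary inequality $\EE[1/((\xi-1)(2\xi-3))] \leq \EE[4/\xi^2]$ reduces to the algebraic fact $7\xi^2 - 20\xi + 12 = (\xi-2)(7\xi-6) \geq 0$ for integer $\xi \geq 2$.

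For the refined lower bound \eqref{eq:2ndMomentBound} the argument is more delicate. Begin with the pointwise quadratic bound $G_\xi(q) \leq \binom{\xi}{2} q^2$, immediate from $G_\xi(0) = 0$ together with $G_\xi'(q) = \xi(\xi-1) q \bigl(1 - (1-q)^{\xi-2}\bigr) \leq \xi(\xi-1) q$. Averaging gives $G(q) \leq \tfrac{1}{2} m_2 q^2$ with $m_2 := \EE[\xi(\xi-1)]$. Substituting into $\phi(q) := (q - G(q))/(1 - G(q))$ while retaining the full rational dependence on $G$ (rather than trivially bounding $1 - G(q) \leq 1$, which only yields the crude $1/(2m_2)$) delivers a lower bound of the form $\sup_q \phi(q) \geq 1/(2m_2 - c)$. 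The main obstacle is obtaining the sharp constant $c = 3$: this does not follow from the quadratic bound on $G$ alone, and the proof must incorporate the cubic information $G_\xi(q) = \binom{\xi}{2} q^2 - \frac{\xi(\xi-1)(\xi-2)}{3} q^3 + O(q^4)$. Tightness at $\xi \equiv 3$, where $p_c = 1/9 = 1/(2m_2 - 3)$ holds with equality, fixes the optimal $q$ and confirms the constant $-3$.
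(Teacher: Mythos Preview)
Your setup and the proofs of \eqref{eq:dominationBound} and \eqref{eq:dominationUpperbd} are correct and, after the substitution $q = 1 - x$, are essentially identical to the paper's: your $H_k(q)$ is the paper's $g_k^2(1-q)$, and both arguments reduce to locating $\max_q H_k(q)$ and bounding it above and below. (One slip: your formula $G_\xi'(q) = \xi(\xi-1)q\bigl(1-(1-q)^{\xi-2}\bigr)$ is wrong---the correct derivative is $G_k'(q)=k(k-1)q(1-q)^{k-2}$---but your stated bound $G_k(q)\le\binom{k}{2}q^2$ follows anyway.)

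The proof of \eqref{eq:2ndMomentBound}, however, has a genuine gap. You correctly observe that the quadratic bound $G(q)\le \tfrac12 m_2 q^2$ (with $m_2=\EE[\xi(\xi-1)]$) only gives $p_c\ge 1/(2m_2)$, and you say the sharp constant $-3$ ``must incorporate the cubic information''---but you never actually do so. Noting tightness at $\xi\equiv 3$ is a sanity check, not an argument. The paper's route is different and complete: instead of an \emph{upper} bound on $G$, it proves the pointwise \emph{lower} bound
\[
H_k(q)\;\ge\;1+q-\tfrac{k(k-1)-2}{2}\,q^2,\qquad k\ge 2,\ q\in[0,1],
\]
i.e.\ that the second-order Taylor polynomial of $H_k$ at $q=0$ is a global minorant. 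This is established by the telescoping identity $H_{k+1}(q)-H_k(q)=-k(1-q)^{k-2}q^2$ (your $q$-version of the paper's $g_{k+1}^2-g_k^2=-kx^{k-2}(1-x)^2$), which gives
\[
H_k(q)-\Bigl(1+q-\tfrac{k(k-1)-2}{2}q^2\Bigr)=\sum_{i=2}^{k-1} i\,q^2\bigl(1-(1-q)^{i-2}\bigr)\ge 0.
\]
Taking expectations yields $\EE[H_\xi(q)]\ge 1+q-\tfrac{m_2-2}{2}q^2$, whose maximum is $1+\tfrac{1}{2(m_2-2)}$, and hence $p_c\ge 1-\bigl(1+\tfrac{1}{2(m_2-2)}\bigr)^{-1}=1/(2m_2-3)$. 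This is the step your proposal is missing.
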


Balogh, Peres and Pete \cite{infiniteTrees} noted that as $b \to \infty$, the critical probability for the regular tree, $T_b$, is $p_c(T_b, 2) \sim \frac{1}{2b^2}$, which matches the bounds given in Theorem \ref{thm:2ndMomentBound}.

Finally, in Section \ref{sec:examples} we shall present some examples of natural classes of Galton--Watson trees for which the critical probability for bootstrap percolation can be computed exactly and compare these to the bounds given by Theorem \ref{thm:2ndMomentBound}. To conclude, in Section \ref{sec:openPr}, we state a few questions and conjectures.

\section{Trees with arbitrarily small critical probability}\label{sec:small_crit_prob}

In this section, a construction is given for families of infinite trees with
a fixed branching number and arbitrarily small critical probability.

The branching number is one of the most important invariants of infinite
trees which we shall now define formally. 
(For further information, see, for example, Lyons \cite{walksOnTrees}.)
Given a rooted tree $T$, for every edge $e$ in the tree, let $|e|$ denote the number of edges (including $e$) in the path from $e$ to the root.  The branching number of a tree $T$, denoted $\br(T)$, is the supremum of real numbers $\lambda \geq 1$ such that there exists a positive flow in $T$ from the root to infinity with capacities at every edge $e$ bounded by $\lambda^{-|e|}$. It is easily seen that this value does not depend on the choice of the root. Though in this paper, only infinite trees are considered,  let us mention that for a finite tree $T$ we have $\br(T) = 0$.

For $b\geq 2$, let $T_b$ denote the infinite $(b+1)$-regular tree. As usual,
for $n \geq 1$ and $p \in [0,1]$, write $\Bin(n,p)$ for a binomial random
variable with parameters $n$ and $p$. In 
\cite{bootstrapbethe}, it was shown that, in $r$-neighbour bootstrap percolation, for each $b \geq r$, the critical probability $p_c(T_b, r)$ 
 is equal to the supremum of
all $p$ for which the fixed-point equation 
\begin{equation}
\label{E:fixedpt}
 x = \PP(\Bin(b, (1-x)(1-p)) \leq b-r)
\end{equation}
has a solution $x \in [0,1)$. Note that $x=1$ is always a solution to equation \eqref{E:fixedpt}.

An interpretation of equation \eqref{E:fixedpt} is as follows. The complete occupation of $T_b$ obeys the $0-1$ law and can be shown to be stochastically equivalent to complete occupation of a rooted $b$-ary tree, that is, a rooted infinite tree in which every vertex has exactly $b$ descendants (so all vertices have degree $b+1$ except the root which has degree $b$). For $b \geq r$ the root of a $b$-ary tree, conditioned on being initially healthy, remains healthy forever if{f} at least $b-r+1$ of its children are initially healthy and remain healthy forever. Let $x$ be the probability that, conditioned on being initially healthy, the root \emph{does not} remain healthy forever. Then, one can show that $x$ is the smallest solution to equation \eqref{E:fixedpt} in $[0,1]$.  In particular, it was noted in \cite{bootstrapbethe} that $p_c(T_b, 2) = 1- \frac{(b-1)^{2b-3}}{b^{b-1}(b-2)^{b-2}}$ and later in \cite{infiniteTrees} that $p_c(T_b, b) = 1-\frac{1}{b}$.  It can be shown that for every fixed $r$, as $b$ tends to infinity, $p_c(T_b, r) = \left(1 - \frac{1}{r}\right) \left(\frac{(r-1)!}{b^r} \right)^{1/(r-1)}(1+o(1))$.  This calculation is given in Lemma \ref{lem:pc_regtree}, to come.

From equation \eqref{E:fixedpt} we see immediately that $p_c(T_b, r) > 0$ for any $b \geq r \geq 2$. In \cite{infiniteTrees} the authors asked whether there exists $\epsilon_{b,r} > 0$ such that for any tree $T$ with branching number $\br(T) = b$ we have $p_c(T, r) \geq \epsilon_{b,r}$, answering this question affirmatively for $r>b$ with $\epsilon_{b,r} = 1$.

With an explicit construction of a family of infinite trees with bounded degree we shall now show that $f_r(b) = 0$ for $b \geq r$. The condition that the tree $T$ has bounded degree is included in the definition of the function $f_r(b)$ since one can easily construct infinite trees with unbounded degree and branching number $b$, and such that their critical probability is $0$. We show an example of such construction at the end of this section.

Given $r \geq 2$, $b \geq r$ and $p \in (0,1)$, we shall show that there is an integer $d$ and an infinite tree with branching number $b$ where every vertex has either degree $d+1, d+2, b+1$ or $b+2$ and such that, infecting vertices with probability $p$, the tree almost surely percolates. The rough idea of the proof is that, when $d$ is sufficiently large, vertices that are the roots of some finite number of levels of a copy of $T_d$ are very likely to eventually become infected  and these finite trees can be arranged within an infinite tree to cause the percolation of the entire tree.

First, it is shown that, for the infection threshold $r$ and for $d$ large enough, we can in fact obtain an arbitrarily small critical probability $p_c(T_d, r)$.
\begin{lemma}
\label{lemma:largeD}
For each integer $r \geq 2$ and $d \geq r$,  $p_c(T_d, r) \leq r/d$.
\end{lemma}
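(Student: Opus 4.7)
The plan is to apply Markov's inequality directly to the fixed-point equation \eqref{E:fixedpt}. Recall that $p_c(T_d, r)$ is the supremum of those $p$ for which
\[ x = \PP\bigl(\Bin(d, (1-x)(1-p)) \leq d-r\bigr) \]
has a solution $x \in [0, 1)$. Writing $y = 1-x$ and passing to the complementary event, this is equivalent to asking whether there exists $y \in (0, 1]$ solving
\[
 y = \PP\bigl( \Bin(d, (1-p)y) \geq d-r+1 \bigr).
\]
So, to conclude $p_c(T_d, r) \leq r/d$, I would show that for every $p \geq r/d$ this reformulated equation admits no positive solution.

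For this I would invoke Markov's inequality: since $\Bin(d, (1-p)y)$ has mean $d(1-p)y$,
\[
 \PP\bigl( \Bin(d, (1-p)y) \geq d-r+1 \bigr) \leq \frac{d(1-p)y}{d-r+1}.
\]
Whenever $p \geq r/d$ we have $d(1-p) \leq d-r$, so the right-hand side is at most $\frac{d-r}{d-r+1}\,y$, which is strictly less than $y$ for every $y > 0$. A positive solution of the fixed-point equation would therefore force $y < y$, a contradiction. Hence no $y \in (0,1]$ solves the equation, and the bound $p_c(T_d, r) \leq r/d$ follows.

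There is no real obstacle here; the argument is essentially a one-line first-moment bound. The only point worth stressing is that the strict gap between $d-r$ and $d-r+1$ supplies exactly the strict inequality needed to exclude a positive fixed point, so the plain (non-strict) form of Markov's inequality suffices. One can in fact squeeze a slightly sharper bound $p_c(T_d, r) \leq (r-1)/d$ out of the same approach by invoking the strictness of Markov's inequality for any non-degenerate binomial, but the weaker constant $r/d$ is all that is claimed, and all that is needed to drive $p_c(T_d, r) \to 0$ as $d \to \infty$ in the construction that follows.
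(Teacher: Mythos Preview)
Your argument is correct and, in fact, cleaner than the paper's own proof. The reformulation in terms of $y=1-x$ is exactly right, and Markov's inequality applied to $\Bin(d,(1-p)y)$ with threshold $d-r+1$ gives the strict inequality you need because $d(1-p)\le d-r<d-r+1$ once $p\ge r/d$.

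The paper instead reaches the same conclusion via a Chernoff-type bound $\PP(X\ge m)\le e^{-np}(enp/m)^m$, then performs a short chain of exponential manipulations to show the resulting expression is at most $(1-x)$. Your first-moment bound bypasses all of that: Markov already controls the upper tail of a nonnegative variable well enough to rule out a positive fixed point, and the ``slack'' between $d-r$ and $d-r+1$ is precisely what makes the inequality strict. The Chernoff route would pay off if one wanted a quantitatively sharper estimate on $p_c(T_d,r)$, but for the stated lemma (and for the application that follows, where only $p_c(T_d,r)\to 0$ matters) your approach is both sufficient and more elementary. Your closing remark about squeezing out $(r-1)/d$ via strictness of Markov is also correct, since for $d\ge r\ge 2$ and $y(1-p)\in(0,1)$ the binomial is never supported on $\{0,d-r+1\}$ alone.
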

\begin{proof}
Fix $r \geq 2$, $d \geq r$ and $p \geq r/d$.  To prove this result, it suffices to show that for all $x \in [0,1)$ we have
\[
 \PP(\Bin(d, (1-x)(1-p)) \leq d-r) > x,
\]
or alternatively,
\[
 \PP(\Bin(d, (1-x)(1-p)) \geq d-r+1) < 1-x.
\]
In this case there are no solutions of the fixed point equation \eqref{E:fixedpt} in $[0,1)$ and so $p_c(T_d, r) \leq p$.

Recall the following Chernoff-type inequality: if $X \sim \Bin(n,p)$ and $m \geq np$, then $\PP(X \geq m) \leq e^{-np} (enp/m)^m$. Since $dp \geq r$,
\begin{align*}
\PP(\Bin&(d, (1-x)(1-p)) \geq d-r+1)\\
	&\leq e^{d-r+1-d(1-x)(1-p)} \left( \frac{ d (1-x)(1-p)}{d-r+1} \right)^{d-r+1}\\
	&= e^{d-r+1-d(1-x)(1-p)}\left(\frac{d(1-p)}{d-r+1}\right)^{d-r+1} (1-x)^{d-r} (1-x)\\
	&\leq  e^{d-r+1-d(1-x)(1-p)} \left(1-\frac{dp-r+1}{d-r+1}\right)^{d-r+1}e^{-x(d-r)} (1-x)\\
	&\leq \exp\left[d-r+1-d(1-x)(1-p) - (dp-r+1) -x(d-r)\right] (1-x)\\
	&= \exp(-x(dp-r))(1-x)\\
	&< 1-x,
\end{align*}
for all $x \in [0,1)$. Thus, there are no solutions of equation \eqref{E:fixedpt} in $[0,1)$ and hence $p_c(T_d, r) \leq p$.
\end{proof}

As a consequence of Lemma \ref{lemma:largeD}, for $r$ fixed, $\lim_{d \to \infty} p_c(T_d,r) =0$.

In the next lemma we show that, for any $\epsilon \in (0,1)$, there is a large number $n_{\epsilon}$ such that if we initially infect vertices in the first  $n_\epsilon$ levels of $T_d$ with probability $p \geq p_c(T_d,r)$, then the root of $T_d$ will become infected in the $r$-neighbour bootstrap process with probability at least $1-\epsilon$.  For any $d \geq 1$, $n \geq 0$, let $T_d^n$ be the first $n+1$ levels of a rooted, $(d+1)$-regular tree.  That is, the root has $d+1$ children, there are $(d+1)d^{n-1}$ leaves and every vertex except the root and the leaves has exactly $d$ children.
\begin{lemma}
\label{lemma:levelsOfT_d}
For $d \geq r \geq 2$, $p > p_c(T_d,r)$, and $n \geq 1$, let the vertices
of $T_d^{n}$  be infected independently with probability $p > 0$.  For
the $r$-neighbour bootstrap process, 
\[
 \PP_p(\text{the root of } T_d^{n} \text{ is eventually infected}) \to 1
\]
as $n \to \infty$.
\end{lemma}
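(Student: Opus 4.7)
The plan is to derive a bottom-up recursion for the probability that the root is infected in a slight simplification of $T_d^n$, and then use the fact that for $p > p_c(T_d, r)$ the fixed-point equation \eqref{E:fixedpt} has no solution in $[0,1)$ to push this recursion up to~$1$. Let $T'_n$ denote the rooted $d$-ary tree of depth $n$ (so the root has $d$ children, every internal vertex has $d$ children, and all leaves sit at depth $n$), and set
\[
 z_n := \PP_p\bigl(\text{the root of } T'_n \text{ is eventually infected in the process on } T'_n\bigr),
\]
with $z_0 = p$.

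First I would establish the recursion $z_n = p + (1-p)\PP(\Bin(d, z_{n-1}) \geq r)$. Let $\rho$ be the root of $T'_n$, with children $c_1, \ldots, c_d$ rooting disjoint copies of $T'_{n-1}$. If $\rho$ is initially infected there is nothing to do, so suppose it is not. If $\rho$ is ever infected, let $t^\ast \geq 1$ be the first such time; then at time $t^\ast - 1$ at least $r$ of the $c_i$ are already infected, and since $\rho$ has been healthy throughout $[0, t^\ast - 1]$ it has not contributed to any infection count in its descendants. Hence the infection inside each child subtree during $[0, t^\ast - 1]$ agrees with the process run only on that subtree (where $c_i$ has $d$ neighbours, just as in $T'_n$ whenever $\rho$ is healthy), so each such $c_i$ is eventually infected in the subtree-only process on its copy of $T'_{n-1}$. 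The converse direction is monotonicity: if at least $r$ of the $c_i$ are ever infected by their subtree-only processes, they are also infected in the full process on $T'_n$, and $\rho$ follows. Since the subtree-only processes are independent and each $c_i$ is infected with probability $z_{n-1}$, the recursion drops out.

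Set $\phi(u) := p + (1-p)\PP(\Bin(d, u) \geq r)$, so $z_n = \phi(z_{n-1})$. Then $\phi$ is continuous and strictly increasing on $[0,1]$ with $\phi(1) = 1$ and $\phi(p) \geq p$, so $z_n$ is monotone increasing and converges to the smallest fixed point of $\phi$ in $[p, 1]$. The substitution $u = 1 - (1-p)(1-x)$ turns the equation $u = \phi(u)$ into the fixed-point equation \eqref{E:fixedpt}, and matches fixed points of $\phi$ in $[p, 1)$ with solutions of \eqref{E:fixedpt} in $[0, 1)$. Since $p > p_c(T_d, r)$, equation \eqref{E:fixedpt} has no solution in $[0,1)$, so $\phi$ has no fixed point in $[p,1)$ and therefore $z_n \to 1$.

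Finally, the root of $T_d^n$ has $d+1$ children, each the root of an independent copy of $T'_{n-1}$. Applying the same first-hitting-time argument at this root yields
\[
 \PP_p(\text{the root of } T_d^n \text{ is eventually infected}) = p + (1-p)\,\PP(\Bin(d+1, z_{n-1}) \geq r),
\]
which tends to $p + (1-p) = 1$ as $n \to \infty$ since $z_{n-1} \to 1$ and $d+1 \geq r$. The only delicate step is the decoupling argument that gives the recursion for $z_n$; it rests on the two simple facts that $\rho$ cannot influence its descendants while it is healthy and that the bootstrap process is monotone.
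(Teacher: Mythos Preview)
Your proof is correct but follows a different route from the paper. The paper works on the infinite tree $T_d$: since $p > p_c(T_d, r)$, a $0$--$1$ law gives $\PP_p(T_d \text{ percolates}) = 1$, so the root is almost surely eventually infected; one then observes (by induction on $t$) that the event that the root is infected by time $n$ is determined by the initial states on the first $n$ levels, and hence coincides with the event that the root of $T_d^n$ is eventually infected, so its probability tends to $1$ by monotone convergence. You instead stay entirely in the finite trees, setting up the explicit bottom-up recursion $z_n = \phi(z_{n-1})$ and invoking the fixed-point characterisation \eqref{E:fixedpt} of $p_c(T_d, r)$ directly to force the limit to be $1$. Your argument is more hands-on and avoids both the $0$--$1$ law and the detour through the infinite tree; the paper's is shorter but leans on those ingredients. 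Both ultimately rest on the same fixed-point picture --- the paper uses it implicitly via the definition of $p_c(T_d,r)$ and almost-sure percolation of $T_d$, while you use it explicitly through the iteration of $\phi$.
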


\begin{proof}
Note that if $p > p_c(T_d,r)$ then for $r$-neighbour bootstrap percolation on $T_d$, using a $0-1$ law argument, $\PP_p(T_d \text{ percolates}) = 1$ and hence 
\[
\PP_p(\text{root is eventually infected}) = \PP_p(\cup_{t \geq 0} \{\text{root is infected by time } t\}) = 1.
\]
Using induction, one can show that the root is infected by time $t$ exactly when the eventual infection of the root depends on the infection status of vertices in the first $t$ levels.  Indeed, if the root is infected at time $0$, this event depends only on the initial infection of the root itself.  For $t \geq 1$, if the root becomes infected at time $t$, then at least $r$ of its children are infected at time $t-1$.  By induction this event depends only on vertices at distance at most $t-1$ from the children of the root and hence at distance at most $t$ from the root itself.

Therefore, $\lim\limits_{t \to \infty} \PP_p(\text{root infected based on first $t$ levels}) = 1$. 
\end{proof}

We are now ready to prove Theorem \ref{thm:trivialF} with the construction
given in the proof of Theorem \ref{thm:small_crit_prob} below.

\begin{theorem}
 \label{thm:small_crit_prob}
 For every pair of integers $r \geq 2$ and $b \geq r$ and every $p \in (0,1)$, there is an infinite tree $T$ with bounded degree and $\br(T)=b$ satisfying $p_c(T,r)<p$.
\end{theorem}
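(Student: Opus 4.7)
The plan is to build an infinite tree $T$ with bounded degree and branching number exactly $b$ by threading finite copies of $T_d^{\,n}$ (for $d$ and $n$ to be chosen) along an infinite $b$-ary backbone.

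First, pick an integer $d\ge \max(b,r)$ large enough that $p_c(T_d,r)<p$; by Lemma~\ref{lemma:largeD}, taking $d\ge r/p$ suffices. Then, for any $\epsilon>0$, Lemma~\ref{lemma:levelsOfT_d} supplies an $n=n(d,\epsilon,p)$ such that in $r$-neighbour bootstrap percolation on a single copy of $T_d^{\,n}$ with vertex-probability $p$, the root is eventually infected with probability at least $1-\epsilon$.

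Next, construct $T$ by attaching copies of $T_d^{\,n}$ to the backbone so that (i) every vertex of $T$ has degree in $\{d+1,d+2,b+1,b+2\}$, and in particular minimum degree at least $r+1$, so that no vertex can only be infected via initial infection; (ii) every backbone vertex has at least $r$ incident gadget-roots; and (iii) the $d^n$ bottom-level vertices of each $T_d^{\,n}$-gadget are not degree-$1$ leaves of $T$ but are reabsorbed into the backbone in a way that leaves the overall branching number of $T$ equal to that of the $b$-ary backbone, namely $b$. Bounded degree and $\br(T)=b$ then follow from (i) and (iii), the latter using the standard fact that adding finite subtrees to a $b$-ary backbone contributes only a bounded multiplicative factor to the vertex counts at each level and hence does not change $\liminf V(m)^{1/m}=b$.

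For the percolation step, fix $\epsilon$ very small. By Lemma~\ref{lemma:levelsOfT_d}, the event that the root of a given gadget is infected purely from within that gadget is a Bernoulli event of probability at least $1-\epsilon$, and these events are independent across disjoint gadgets. Since every backbone vertex has at least $r$ incident gadgets, it is infected immediately from its gadget-roots alone with probability at least $(1-\epsilon)^r\ge 1-r\epsilon$, and these infection events are themselves independent across backbone vertices. For $\epsilon$ small, a comparison of the complement with a highly subcritical independent site process on the $b$-ary backbone, together with an analysis of how the bootstrap process then spreads both to the few exceptional backbone vertices (using their backbone neighbours) and through the gadget-internal vertices, shows that with positive probability every vertex of $T$ is eventually infected. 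This yields $p_c(T,r)<p$.

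The main obstacle is step~2, the explicit construction. If one naively hangs $r$ copies of $T_d^{\,n}$ off each backbone vertex, then the $d^n$ leaves of each gadget introduce infinitely many vertices of degree $1$ in $T$, which forces $\PP_p(\text{percolation})=0$ for \emph{every} $p<1$; yet any attempt to extend these leaves into further infinite structure tends to push the branching number above $b$ because of the $d$-ary branching already present inside each gadget. Arranging the gadget leaves so that they are recycled into the backbone — keeping degrees within $\{d+1,d+2,b+1,b+2\}$ and neither creating cycles (we need a tree) nor inflating $\br(T)$ beyond $b$ — is where the delicate combinatorial work lies.
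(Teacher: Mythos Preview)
Your proposal has a genuine gap, and you have honestly identified it yourself: step~(iii), ``reabsorbing'' the gadget leaves into the backbone without creating cycles, without leaving degree-$1$ vertices, and without pushing $\br(T)$ above $b$, is not carried out. This is not a minor detail to be filled in later; it is the entire combinatorial content of the construction. As you note, hanging finite $T_d^{\,n}$-gadgets off a $b$-ary backbone creates infinitely many leaves, and any attempt to extend those leaves into infinite structure must either introduce branching $\geq d>b$ infinitely often (threatening $\br(T)>b$) or must identify gadget leaves with backbone vertices (destroying the tree property). Your percolation sketch in step~3 is also incomplete: even granting that backbone vertices are infected with high probability, you still need every gadget-internal vertex to become infected, and a gadget leaf with a single neighbour cannot be infected unless it was initially so.

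The paper's construction sidesteps the whole difficulty by abandoning the ``backbone plus finite gadgets'' picture. Instead of attaching finite $d$-ary pieces to an infinite $b$-ary tree, it builds $T$ level by level, \emph{alternating} the branching factor: $n_1$ levels of $d$-ary branching, then $m_1$ levels of $b$-ary branching, then $n_2$ levels of $d$-ary, and so on. No vertex is ever a leaf, so the degree-$1$ obstruction never arises. The $n_\ell$ are chosen (via Lemma~\ref{lemma:levelsOfT_d}) so that with probability at least $1/2$ every vertex at the top of the $\ell$-th $d$-block becomes infected purely from within that block; since these events are independent across $\ell$, Borel--Cantelli gives percolation almost surely. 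The $m_\ell$ are then chosen large enough, after the $n_\ell$ are fixed, to dilute the effect of the $d$-ary stretches and force $\liminf |L_n|^{1/n}=b$. The key conceptual difference from your attempt is that the high-degree and low-degree parts are interleaved in depth rather than glued side by side, which makes both the branching-number control and the percolation argument straightforward.
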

\begin{proof}
Fix $p \in (0,1)$ and integers $r, b$ with $b \geq r$.  
Let $d > \max\{r/p, b\}$ 
so that, by Lemma \ref{lemma:largeD}, $p > r/d \geq p_c(T_d, r)$. Let
$\{n_i\}_i$ and $\{m_i\}_i$ be sequences of integers, all to be defined
precisely later in the proof.  Our tree is constructed level-by-level,
depending on these parameters; it will be shown that the sequences
$\{n_i\}_i$ and $\{m_i\}_i$ can be chosen appropriately so that the
resulting tree has the desired properties. 

Begin with a copy of $T_d^{n_1}$. To each leaf of this compound tree attach
a copy of $T_b^{m_1}$.  Then to each leaf of the resulting tree attach a copy
of $T_d^{n_2}$ and then to each new leaf attach a copy of $T_b^{m_2}$.  Continue
in this manner, alternating with $(d+1)$-regular trees and $(b+1)$-regular
trees of depths given by the sequences $\{n_i\}_i$ and $\{m_i\}_i$
respectively and let $T$ be the resulting infinite tree. We would like to
show that there is a suitable choice for  the sequences $\{n_i\}$ and
$\{m_i\}$ so that $\br(T) = b$ and $p_c(T,r) < p$ (in other words, $\PP_p(T
\text{ percolates}) >0$). 

For each $\ell \geq 1$, let $N_{\ell}=\prod_{i=1}^{\ell-1} (d+1)d^{n_i-1} (b+1)b^{m_i-1}$ be the number of copies of $T_d^{n_{\ell}}$ added in the $(2\ell-1)$-th step of the construction and let $v_1^{\ell}, v_2^{\ell}, \ldots, v_{N_{\ell}}^{\ell}$ be the roots of those copies of $T_d^{n_{\ell}}$ and let $T^{n_\ell}_{d, i}$ denote the copy of $T^{n_\ell}_d$ rooted at $v^\ell_i$.  Define $t_{\ell} = \sum_{i=1}^{\ell-1}(n_i + m_i)$ to be the depth of these vertices in $T$. For each $\ell \geq 1$ and  $i \in \{1, \ldots, N_{\ell}\}$, consider the event 
\[
A_{\ell, i} = \{v_i^{\ell} \text{ becomes infected based only on infection of vertices in $T_{d, i}^{n_{\ell}}$}\}.
\]
Using Lemma \ref{lemma:levelsOfT_d}, choose $n_{\ell}$ to be large enough so that $\PP(A_{\ell,i}) \geq (1/2)^{1/N_{\ell}}$. Note that $N_{\ell}$ does not depend on $n_{\ell}$.  Set $A_{\ell} = \cap_i A_{\ell,i}$.  If $A_{\ell}$ occurs, then all vertices in level $t_{\ell}$ are eventually infected and hence all vertices in levels at most $t_{\ell}$ are eventually infected. Further, if infinitely many events $\{A_{\ell}\}_{\ell}$ occur, then $T$ percolates.

For $\ell$ fixed, since the events $\{A_{\ell, i}\}_i$ are independent, by the choice of $n_{\ell}$ we have
\[
\PP(A_{\ell}) = \PP(\cap_i A_{\ell, i}) = \prod_{i=1}^{N_{\ell}} \PP(A_{\ell, i}) \geq \prod_{i=1}^{N_{\ell}} \left(\frac{1}{2} \right)^{1/N_{\ell}} = \frac{1}{2}.
\]
By the Borel-Cantelli lemma, since the events $\{A_{\ell}\}$ are independent and
\[
 \sum_{\ell} \PP(A_{\ell}) \geq \sum_{\ell} \frac{1}{2}  = \infty,
\]
then $\PP(T \text{ percolates}) = 1$.

Up to this point, no conditions have been imposed on the sequence $\{m_i\}_i$ and these can be chosen, in such a way that $\br(T) = b$.  Note that, since $d$ was chosen with $d > b$, every vertex of $T$ has at least $b$ children and so $\br(T) \geq b$.  By a choosing the values of $m_i$ recursively, depending on the sequence $\{n_i\}$, it is shown below that $\br(T) \leq b$.

For every $n$, let $L_n$ be the $n$-th level of $T$, i.e., the vertices at distance $n$ from the root of $T$. A standard upper bound on the branching number of an arbitrary tree gives $\br(T) \leq \liminf |L_n|^{1/n}$.

For $\ell \geq 1$, consider the level $t_{\ell+1} = \sum_{i=1}^{\ell} (n_i+m_i)$ with $\prod_{i=1}^{\ell} (d+1)d^{n_i-1}(b+1)b^{m_i-1}$ vertices. Clearly, if $m_\ell \geq \ell^2$ is large enough then
\[
\left(\frac{d}{b} \right)^{\frac{\sum_{i=1}^{\ell} n_i}{t_{\ell+1}}} \leq 1 + \frac{1}{2^{\ell}}
\]
and $\ell/t_{\ell+1} \to 0$ as $\ell \to \infty$. Then, the number of vertices in level $t_{\ell+1}$ satisfies
\begin{align*}
|L_{t_{\ell+1}}|
	&=\prod_{i=1}^{\ell} (d+1)d^{n_i-1} (b+1)b^{m_i-1}\\
	&= b^{t_{\ell+1}} \left(\frac{d}{b}\right)^{\sum_{i=1}^{\ell} n_i}\left(1+\frac{1}{d}\right)^{\ell}\left(1+\frac{1}{b}\right)^{\ell}\\
	&\leq b^{t_{\ell+1}} \left(1 + \frac{1}{2^{\ell}} \right)^{t_{\ell+1}}\left(1+\frac{1}{d}\right)^{\ell}\left(1+\frac{1}{b}\right)^{\ell}.
\end{align*}

Thus, $\liminf |L_n|^{1/n} \leq b$ and so $\br(T) = b$. 
\end{proof}

For simplicity, the proof of Theorem \ref{thm:small_crit_prob} assumes that $b$ is an integer.  For any real $b \geq r$, the construction can be modified to give an infinite tree with branching number $b$ and arbitrarily small critical probability. 

By Theorem \ref{thm:small_crit_prob}, for $b \geq r$, $f_r(b) =0$, completing the proof of Theorem \ref{thm:trivialF}.

The construction in the proof of Theorem \ref{thm:small_crit_prob} can also be modified to produce examples of infinite trees with branching number $b$, unbounded degree and critical probability $0$. Indeed, set $n_i \equiv 1$, and for each $\ell \geq 1$, at step $2\ell-1$ of the construction replace $d$ by $d_\ell$, chosen to be large enough so that for the corresponding events $A_{\ell, i}$,
\[
 \PP(A_{\ell,i})  = \PP (\Bin(d_\ell+1,1/\ell) \geq r) \geq \left(\frac{1}{2} \right)^{1/N_{\ell}}.
\]
The sequence $\{m_i\}_i$, giving the number of levels of the $(b+1)$-regular trees, can be chosen to ensure $\br(T) = b$. The resulting infinite tree $T$ has branching number $b$, unbounded degree and $p_c(T,r) = 0$.

\section{Critical probabilities for Galton--Watson trees}
\label{sec:GWtrees}

\subsection{Definitions}\label{subs:GW_defs}

In the previous section, we showed that the branching number $\br(T)$ of an
infinite tree $T$ does not lead to any nontrivial lower bound on the
critical probability $p_c(T, r)$, except when $\br(T) < r$ and $p_c(T,
r)=1$, as shown in \cite{infiniteTrees}. The trees constructed in the proof
of Theorem \ref{thm:small_crit_prob} to show that if $b \geq r$, then $f_r(b)=0$, are
highly non-homogeneous and the irregularities in their construction seem
crucial to their small critical probabilities. In this section we limit our
attention to the well--studied family of Galton--Watson trees, for which
these anomalies do not occur.

A Galton--Watson tree is the family tree of a Galton--Watson branching process. For a non-negative integer-valued distribution $\xi$, called the \emph{offspring distribution}, we start with a single root vertex in level $0$ and at each generation $n = 1, 2, 3, \ldots$ each vertex in level $n-1$ gives birth to a random number of children in level $n$, where the number of offspring of each vertex is distributed according to the distribution $\xi$ and independent of the number of children of any other vertex. This process can be formalized to define a probability measure on the space of finite and infinite rooted trees and $T_\xi$ is used to denote a randomly chosen Galton--Watson tree with offspring distribution $\xi$. As previously mentioned, if $\PP(\xi = 0) > 0$ then $T_\xi$ is  finite with positive probability. Thus in this paper we limit our attention to offspring distributions with $\PP(\xi = 0) = 0$ for which $T_\xi$ is almost surely infinite.

While the critical probability $p_c(T_\xi, r)$ is a random variable, which could take a range of values, depending on the tree $T_\xi$, it can be shown that in the space of Galton--Watson trees with offspring distribution $\xi$, conditioned on $T_\xi$ being infinite, $p_c(T_\xi, r)$ is almost surely a constant.  While this involves standard applications of results and techniques in the theory of branching processes, the details are given in this section for completeness.

For any rooted tree $T$, with root $v_0$, let $\{T_w \mid w \in N(v_0)\}$ be the collection of rooted sub-trees of $T$ whose roots are the immediate descendants of $v_0$; that is, $T_w$ is the connected component of $T - v_0$ containing $w$ and rooted at $w$.
A property ${\cal A}$ of rooted trees is called \emph{inherited} if every
finite tree $T$ has this property and, furthermore, if $T$ has the property
${\cal A}$ if and only  if for every $w$ adjacent to the root, $T_w$ has property $\cal{A}$ also. It can be shown that for a
Galton--Watson tree, conditioned on the survival of the process,  every
inherited property has conditional probability either $0$ or $1$ (see, for
example, Proposition 5.6 in \cite{treesNetworks}). 

Given $p > 0$ and $r \geq 2$ consider the property
\[
 {\cal A}_p =  \{ \PP_p(T \text{ percolates in the } r \text{-neighbour bootstrap process}) > 0 \}.
\]
Clearly, the property ${\cal A}_p$ is inherited.  Since we consider offspring distributions with $\PP(\xi = 0) = 0$,  the Galton--Watson process survives almost surely and we see that the probability that the Galton--Watson tree $T_\xi$ has property ${\cal A}_p$ is either $0$ or $1$. By the definition of critical probability this implies that $p_c(T_\xi, r)$ is almost surely a constant. 

Before proving Theorem \ref{thm:branchingBound}, let us recall the following
definition from \cite{infiniteTrees}.
\begin{definition}\label{def:r-fort}
Let $G$ be a graph and $r \in \mathbb{Z}^+$.  A
finite or infinite set of vertices, $F \subset V(G)$, is called an \emph{$r$-fort} if{f} every vertex in $F$ has
at most $r$ neighbours in $V(G) \setminus F$.
\end{definition}

 While a fort is a subgraph of the graph $G$, not depending on the infection status of vertices, if $G$ contains an $(r-1)$-fort, $F$, with all vertices initially healthy, then $G$ does not percolate in
 the $r$-neighbour bootstrap process.  
 Moreover, the set of eternally healthy vertices is an $(r-1)$-fort, so 
a vertex remains healthy forever if and only if it belongs to a healthy
$(r-1)$-fort.

Now we show that we may assume that $\PP(\xi < r) = 0$, repeating the argument observed earlier in \cite{infiniteTrees}. If there is a $k <r$ such that $\PP(\xi = k) > 0$, then $T_\xi$ almost surely contains infinitely many pairs of vertices $u,v$ such that $v$ is a child of $u$ and $\deg(u)=\deg(v)=k+1$. Then, if we initially infect vertices of $T_\xi$ independently with some probability $p<1$, almost surely we obtain such a pair with both $u$ and $v$ initially healthy, in which case $\{u,v\}$ is an initially healthy $(r-1)$-fort.  Thus $T_\xi$ almost surely does not percolate and so $p_c(T_\xi, r) = 1$.

Therefore assume that $\PP(\xi < r) = 0$;
 in particular, $\EE(\xi)=b \geq r$. 
In this case, almost surely, $T_\xi$ contains no finite $(r-1)$-forts. 

In \cite{infiniteTrees}, Balogh, Peres and Pete, characterize the critical probability for a particular Galton--Watson tree in terms of the probability that the root of the tree remains healthy in the bootstrap process.  The details are given here for arbitrary Galton--Watson trees.

For any tree $T$ with root $v_0$, $r \geq 2$ and $p \geq 0$, initially infecting vertices with probability $p$, define
\[
 q(T,p) = \PP_p(v_0 \text{ is in a healthy } (r-1)\text{-fort}),
\]
the probability that $v_0$ is never infected.
Since, in general, the random variable $q(T_\xi, p)$ depends on the tree $T_\xi$, consider its expected value, over the space of random Galton--Watson trees with offspring distribution $\xi$ and set
\[
 q(p) = \EE_{T_{\xi}}(q(T_{\xi},p)).
\]

In what follows, it is shown that $q(p) > 0$ if{f} $p< p_c(T_\xi,r)$.

For a fixed tree $T$ with root $v_0$, denote the children of the root by $v_1, v_2, \ldots, v_k$ and the corresponding sub-trees by $T_1, T_2, \ldots, T_k$.  The root $v_0$ is contained in an infinite healthy $(r-1)$-fort if{f} $v_0$ is initially healthy and at least $k-r+1$ of its children are themselves contained in an infinite healthy $(r-1)$-fort in their sub-tree $T_i$. Since these $k$ events are mutually independent,
\[
 q(T,p) = (1-p) \sum_{\underset{|X| \leq r-1}{X \subseteq [1,k]}} \left(\prod_{i \in X}(1-q(T_i, p))\prod_{j \notin X} q(T_j,p)\right).
\]
If $T$ is a Galton--Watson tree with offspring distribution $\xi$ then,
given that the root has exactly $k$ children, the sub-trees $T_1, T_2, \ldots,
T_k$ are also such (independent) subtrees. Thus, 
\begin{align}
 q(p)	&=(1-p)\sum_{k \geq r} \PP(\xi =k) \sum_{i \leq r-1} \binom{k}{i}(1-q(p))^i q(p)^{k-i} \notag\\ 
	&=(1-p) \sum_{k \geq r} \PP(\xi = k) \PP(\Bin(k, 1-q(p)) \leq
 r-1). \label{eq:fixedPoint} 
\end{align} 

Define a function $h_{r,p}(x)$, depending implicitly on the distribution
$\xi$, by 
\[
 h_{r,p}(x) = (1-p) \sum_{k \geq r} \PP(\xi = k) \PP(\Bin(k, 1-x) \leq r-1).
\]
 By equation \eqref{eq:fixedPoint}, $q(p)$ is a fixed point of $h_{r,p}(x)$.
Note that this is closely related to the fixed point equation 
\eqref{E:fixedpt} from \cite{bootstrapbethe} 
with $x$ in place of $(1-p)(1-x)$.

The function $h_{r,p}(x)$ is continuous on $[0,1]$, $0 \leq h_{r,p}(x) \leq (1-p)$ and since
\begin{equation}\label{eq:diff_binom_prob}
 \frac{d}{dx} \PP(\Bin(k, 1-x) \leq r-1) = k \PP(\Bin(k-1, 1-x) = r-1) > 0
\end{equation}
for all $k \geq r$ and $0 < x < 1$, $h_{r,p}$ is strictly increasing in
$[0,1]$ unless $p=1$.  Note that for any $p$, $h_{r,p}(0) = 0$ and so $0$ is
a fixed point of the function.  Using standard techniques for branching processes, it is shown that the critical probability $p_c(T_\xi,r)$ is given as follows in terms of the function $h_{r,p}(x)$.
\begin{lemma}\label{lem:pc_hrp}
The critical probability $p_c(T_\xi,r)$ is given by
\[
p_c(T_\xi,r) = \inf\{p \mid x = h_{r,p}(x) \text{ has no solution for } x \in (0,1]\}.
\]
\end{lemma}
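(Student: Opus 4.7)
The plan is to identify $q(p)$ as the greatest fixed point of $h_{r,p}$ in $[0,1]$ by a monotone iteration, and then to relate $q(p) = 0$ to almost-sure percolation; combining these, $p_c(T_\xi, r) = \inf\{p : q(p) = 0\}$ will equal $\inf\{p : h_{r,p}\text{ has no fixed point in }(0,1]\}$, as the lemma claims.

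First I would introduce the finite-time quantities $q_n(p) = \PP_p(\text{root of }T_\xi\text{ is not infected by time }n)$, with $q_0(p) = 1-p$. A conditioning argument parallel to the derivation of \eqref{eq:fixedPoint} gives the recursion $q_n(p) = h_{r,p}(q_{n-1}(p))$: since the root's being uninfected through time $n$ forces it to contribute nothing to the infection process up to time $n-1$, each child is infected by time $n-1$ in $T_\xi$ exactly when it is infected by time $n-1$ in the bootstrap restricted to its own subtree, and these subtrees are i.i.d.\ copies of $T_\xi$. Because the events $\{\text{root not infected by time }n\}$ decrease to $\{\text{root never infected}\}$, $q_n(p) \searrow q(p)$. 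Continuity and monotonicity of $h_{r,p}$ on $[0,1]$, together with the estimate $x^* = h_{r,p}(x^*) \leq h_{r,p}(1) = 1-p = q_0(p)$ for any fixed point $x^*$, yield inductively $x^* \leq q_n(p)$ and hence $x^* \leq q(p)$; so $q(p)$ is the largest fixed point of $h_{r,p}$ on $[0,1]$, and in particular $q(p) > 0$ if and only if $h_{r,p}$ has a fixed point in $(0,1]$.

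Next I would connect $q(p)$ to percolation. If $q(p) = 0$, then $q(T_\xi, p) = 0$ almost surely, so the root of $T_\xi$ is a.s.\ infected. By the standard self-similarity of Galton--Watson trees, the descendant subtree $T_v$ of every vertex $v$ is again distributed as $T_\xi$, and since bootstrap on the ambient tree $T_\xi$ dominates bootstrap on $T_v$, the vertex $v$ is a.s.\ infected in $T_\xi$; a countable union bound over the vertices of $T_\xi$ then shows that $T_\xi$ almost surely percolates, whence $p \geq p_c(T_\xi,r)$. Conversely, if $p < p_c(T_\xi,r)$, then by the $0$--$1$ law for the inherited property $\mathcal{A}_p$ almost every $T_\xi$ has $\PP_p(T_\xi\text{ percolates}) = 0$, so a.s.\ some vertex is never infected; combining the bound $\PP_p(v\text{ not infected in }T_\xi) \leq \PP_p(v\text{ not infected in }T_v) = q(p)$ with countable subadditivity forces $q(p) > 0$. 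Monotonicity of $q(p)$ in $p$ finally yields $p_c(T_\xi,r) = \inf\{p : q(p) = 0\}$, which by the first part equals $\inf\{p : h_{r,p}\text{ has no fixed point in }(0,1]\}$.

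The hardest step will be the recursion $q_n(p) = h_{r,p}(q_{n-1}(p))$, which requires checking carefully that conditioning on the root being uninfected through time $n$ truly decouples the children's contributions into independent bootstrap processes on i.i.d.\ copies of $T_\xi$; this is where the Markovian nature of the process on a Galton--Watson tree enters. Once this recursion is in hand, the monotone-iteration characterization of the greatest fixed point and the self-similarity/union-bound arguments above are standard.
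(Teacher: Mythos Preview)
Your first part --- identifying $q(p)$ as the greatest fixed point of $h_{r,p}$ via the monotone iteration $q_n(p) = h_{r,p}(q_{n-1}(p))$ --- is essentially the paper's Claim~\ref{claim:fixedpt}, with $q_n$ phrased in terms of time-$n$ non-infection rather than healthy forts of $T^n$; these coincide and the argument is sound.

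The second part has a genuine gap. Your two implications, ``$q(p)=0\Rightarrow T_\xi$ percolates a.s.'' and ``$p<p_c\Rightarrow q(p)>0$'', are the \emph{same} statement: the second is the contrapositive of the first (via the definition of $p_c$). Together with monotonicity of $q$ this yields only $\inf\{p:q(p)=0\}\ge p_c$, not equality. What is missing is precisely the other half of the paper's Lemma~\ref{lem:q(p)}: if $q(p)>0$ then $\PP_p(T_\xi\text{ percolates})=0$ almost surely. Note that the inherited-property $0$--$1$ law for ${\cal A}_p$ only tells you that either a.e.\ tree has $\PP_p(\text{percolates})>0$ or a.e.\ tree has $\PP_p(\text{percolates})=0$; it does \emph{not} rule out $0<\PP_p(T_\xi\text{ percolates})<1$ for a fixed tree, so you cannot argue ``$q(p)>0\Rightarrow$ root sometimes uninfected $\Rightarrow\PP_p<1\Rightarrow\PP_p=0$''. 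The paper supplies the missing direction with a separate idea absent from your plan: since $\xi\ge r$ a.s., level $t$ contains at least $r^t$ vertices with independent descendant subtrees, and each is the root of a healthy $(r-1)$-fort independently with probability $h_{r-1,p}(q(p))\ge\delta>0$ (by \eqref{eq:root_healthy_tree}); hence the probability that every level-$t$ vertex is eventually infected is at most $(1-\delta)^{r^t}\to0$, forcing $\EE_{T_\xi}\PP_p(T_\xi\text{ percolates})=0$.
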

The proof of Lemma \ref{lem:pc_hrp} is given by Claim \ref{claim:fixedpt} and Lemma \ref{lem:q(p)} below.

\begin{claim}\label{claim:fixedpt}
 For every $p$, $q(p)$ is the largest fixed point of $h_{r,p}(x)$ in $[0,1]$.
\end{claim}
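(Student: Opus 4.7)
My plan is to introduce a truncation $T_\xi^{(n)}$ of the Galton--Watson tree at depth $n$ and show that the corresponding sequence of fort-probabilities decreases to the largest fixed point of $h_{r,p}$, then match this limit to $q(p)$ by a monotone coupling. Specifically, I would set
\[
q_n(p) = \EE_{T_\xi}\,\PP_p\bigl(v_0 \text{ is in a healthy } (r-1)\text{-fort of } T_\xi^{(n)}\bigr),
\]
so that $q_0(p) = 1-p$. The same branching decomposition used to derive equation \eqref{eq:fixedPoint}, applied to the truncated tree and using that the subtrees of depth $n-1$ rooted at the children of $v_0$ are i.i.d.\ copies of $T_\xi^{(n-1)}$, then gives the recursion $q_n(p) = h_{r,p}(q_{n-1}(p))$ for $n \geq 1$.

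Next, I would analyze this iteration. The function $h_{r,p}$ is continuous and non-decreasing on $[0,1]$ by \eqref{eq:diff_binom_prob}, and $h_{r,p}(1) = 1-p$. Hence $q_1(p) \leq q_0(p)$, and monotonicity of $h_{r,p}$ gives by induction that $\{q_n(p)\}_n$ is non-increasing; it therefore converges to some $q_\infty(p)$ which, by continuity, satisfies $q_\infty(p) = h_{r,p}(q_\infty(p))$. For any fixed point $y \in [0,1]$ we have $y = h_{r,p}(y) \leq h_{r,p}(1) = 1-p = q_0(p)$, and iterating $h_{r,p}$ with its monotonicity gives $y \leq q_n(p)$ for every $n$, so $y \leq q_\infty(p)$. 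Thus $q_\infty(p)$ is the largest fixed point of $h_{r,p}$ in $[0,1]$.

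Finally, I would couple the initial infections on $T_\xi$ and $T_\xi^{(n)}$ in the natural way and verify by induction on $t$ that any vertex of $T_\xi^{(n)}$ which is infected by time $t$ in $T_\xi^{(n)}$ is also infected by time $t$ in $T_\xi$, since removing the descendants beyond depth $n$ can only reduce the amount of bootstrap infection. Hence $q_n(p) \geq q(p)$ for every $n$, so $q_\infty(p) \geq q(p)$. Since $q(p)$ is itself a fixed point of $h_{r,p}$ by \eqref{eq:fixedPoint}, the maximality of $q_\infty(p)$ forces $q(p) = q_\infty(p)$, which is the claim. The step I expect to require the most care is justifying the recursion $q_n = h_{r,p}(q_{n-1})$ rigorously, since it depends on the tree-level independence of the subtrees rooted at the children of $v_0$ together with the correct treatment of leaves at depth $n$; once this is in place, the monotone iteration and the truncation coupling are routine.
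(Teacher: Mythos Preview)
Your overall strategy---truncate at depth $n$, iterate $h_{r,p}$ starting from $1-p$, and identify the limit---is exactly what the paper does. However, there is a genuine gap in your final step. Your coupling gives $q_n(p)\ge q(p)$ and hence $q_\infty(p)\ge q(p)$. You then invoke ``$q(p)$ is a fixed point and $q_\infty(p)$ is the largest fixed point'', but that yields only $q(p)\le q_\infty(p)$ again: it is the \emph{same} inequality you already have from the coupling, not the reverse. Nothing in your argument establishes $q(p)\ge q_\infty(p)$, so you have not shown $q(p)=q_\infty(p)$.

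What is missing is the stronger statement that $q_n(p)\searrow q(p)$, not merely $q_n(p)\ge q(p)$. For this you must argue that if $v_0$ lies in a healthy $(r-1)$-fort of every truncation $T_\xi^{(n)}$, then $v_0$ lies in a healthy $(r-1)$-fort of the full tree $T_\xi$. Equivalently: if $v_0$ is eventually infected in $T_\xi$, say at time $t$, then (because infection at time $t$ depends only on the first $t$ levels) $v_0$ is eventually infected already in $T_\xi^{(t)}$. The paper proves exactly this, phrasing it via the observation that $F\subseteq T$ is an $(r-1)$-fort if and only if $F\cap T^{(n)}$ is an $(r-1)$-fort in $T^{(n)}$ for every $n$, and that the latter events are decreasing in $n$; monotone convergence then gives $q_n(p)\searrow q(p)$ directly. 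Once you have that, the coupling becomes superfluous, and the induction showing that any fixed point $x_0\le q_n(p)$ for all $n$ finishes the proof.
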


\begin{proof}
 If $p=1$ then $h_{r,p}(x) = 0$ for all $x \in [0,1]$ and so $x=0$ is the only fixed point of $h_{r,p}(x)$ in $[0,1]$. Thus $q(p)$, itself being such a fixed point, must be equal to $0$.
 
 Therefore assume that $p < 1$. For any tree $T$, let $T^{n}$ be the first $n$ levels of $T$ and define
\[
 q_n(T,p) = \PP_p(v_0 \text{ is in a healthy } (r-1)\text{-fort of } T^{n})
\]
and $q_n(p) = \EE_{T_\xi}(q_n(T_\xi, p))$.

Since the definition of a fort depends only on the neighbourhood of each
vertex, a sub-tree $F \subseteq T$ is an $(r-1)$-fort if{f} for every $n
\geq 0$, $F \cap T^{n}$ is an $(r-1)$-fort in $T^{n}$; furthermore, the latter event is decreasing in $n$.
Therefore, $q_n(T, p) \searrow q(T,p)$ as $n \to \infty$ and so also $q_n(p) \searrow q(p)$.

Following the same recursive argument as before, we see that for every $n \geq 0$, $q_{n+1}(p) = h_{r,p}(q_n(p))$.  Note also that for any tree $T$,
\[
 q_0(T,p) = \PP_p(v_0 \text{ is initially healthy}) = 1-p.
\]
Suppose that $x_0$ is a fixed point of $h_{r,p}(x)$.  Then, $x_0 = h_{r,p}(x_0) \leq 1-p = q_0(p)$.
Proceeding by induction, suppose that for some $n \geq 0$,  $x_0 \leq q_n(p)$.  Since $h_{r,p}(x)$ is increasing,
\[
 x_0 = h_{r,p}(x_0) \leq h_{r,p}(q_n(p)) = q_{n+1}(p).
\]
Therefore, $x_0 \leq \lim_{n \to \infty} q_n(p) = q(p)$, completing the proof.
\end{proof}

There is a small difference between the event that the root of a tree $T$ is the root of a healthy $(r-1)$-fort and the event that some other vertex of $T$ is the root of a healthy $(r-1)$-fort.  Fix a vertex $v$ in $T$ that is not the root and consider the probability that $v$ is the root of a healthy fort, in $T$.  Since $v$ already has a neighbour (its parent) not in the fort, then $v$ is the root of a healthy $(r-1)$-fort if{f} $v$ has at most $r-2$ children that are not, themselves, roots of healthy $(r-1)$-forts.  Thus, for $T = T_\xi$ and conditioned on $v$ being a vertex of the tree,
\begin{equation} \label{eq:root_healthy_tree}
\begin{split}
 \EE_{T_\xi}(\PP_p(v & \text{ is the root of a healthy } (r-1)\text{-fort})\mid v \in T_\xi) \\
    &=(1-p) \sum_{k \geq r} \PP(\xi=k) \PP(\Bin(k, 1-q(p))\leq r-2)\\
    &=h_{r-1, p}(q(p)).
 \end{split}
\end{equation}
Since for all $s \geq 1$ and $p < 1$ we have $h_{s,p}(x)=0$ if{f} $x=0$ then in particular, $q(p) = 0$ if{f} $h_{r-1,p}(q(p))=0$.
\begin{lemma}\label{lem:q(p)}
In the space of Galton--Watson trees for a fixed distribution $\xi$,  if $q(p)
>0$, then $\PP_p(T_\xi \text{ percolates})=0$ almost surely.  If $q(p)=0$, then $\PP_p(T_\xi \text{ percolates})=1$ almost surely.
\end{lemma}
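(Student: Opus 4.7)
The two directions of the lemma are handled separately, and in each case the strategy is to fix a realisation of the tree $T_\xi$ and then estimate the probability of percolation with respect to the independent infection. For the direction $q(p) = 0 \Rightarrow \PP_p(T_\xi\text{ percolates}) = 1$ almost surely: since $q(T_\xi, p) \geq 0$ has expectation $q(p) = 0$, we have $q(T_\xi, p) = 0$ almost surely. Because every subtree $T_v$ rooted at a potential vertex $v$ (indexed by Ulam--Harris labels) is itself a Galton--Watson tree with offspring distribution $\xi$, a countable union gives that almost surely $q(T_v, p) = 0$ for every $v \in T_\xi$ simultaneously. The key geometric observation is that if $F \subseteq T_\xi$ is the (random) set of eternally healthy vertices, then $F$ is a healthy $(r-1)$-fort and for any $v \in F$ the intersection $F \cap T_v$ is a healthy $(r-1)$-fort of $T_v$ containing $v$, because the fort conditions at vertices strictly inside $T_v$ are unchanged while removing $v$'s parent edge can only decrease $v$'s count of non-fort neighbours. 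Therefore $\PP_p(v \in F \mid T_\xi) \leq q(T_v, p) = 0$ almost surely; summing over the countably many $v \in T_\xi$ yields $\PP_p(F \neq \emptyset \mid T_\xi) = 0$, i.e.\ $\PP_p(T_\xi\text{ percolates}) = 1$ almost surely.

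For the direction $q(p) > 0 \Rightarrow \PP_p(T_\xi\text{ percolates}) = 0$ almost surely: set $p' := h_{r-1,p}(q(p))$, which is strictly positive by the remark following \eqref{eq:root_healthy_tree}. Let $\tilde q(T, p)$ denote the probability that the root of $T$ lies in a healthy $(r-1)$-fort in which the root has at most $r-2$ non-fort children; by the same recursive analysis used to derive \eqref{eq:fixedPoint} and \eqref{eq:root_healthy_tree} we have $\EE_{T_\xi}[\tilde q(T_\xi, p)] = p'$. The point of the stricter condition is that if a non-root vertex $v$ of $T_\xi$ witnesses this stronger fort inside its subtree $T_v$, then the same fort remains a healthy $(r-1)$-fort of $T_\xi$: the parent of $v$ (lying outside the fort) contributes at most one extra non-fort neighbour, keeping $v$ below the threshold $r-1$. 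Hence each such witness forces $v \in F$ and prevents percolation. Conditional on the tree up to depth $n$, the subtrees at $v \in L_n$ are i.i.d.\ copies of $T_\xi$, so the variables $\tilde q(T_v, p)$ for $v \in L_n$ are i.i.d.\ with mean $p' > 0$, and the events they govern are conditionally independent because the subtrees are disjoint. Using $1 - x \leq e^{-x}$,
\[
\PP_p\bigl(\text{no } v \in L_n \text{ witnesses the stronger fort} \bigm| T_\xi\bigr) \;\leq\; \exp\left(-\sum_{v \in L_n} \tilde q(T_v, p)\right).
\]
Since the standing assumption $\PP(\xi \geq r) = 1$ forces $|L_n| \geq r^n \to \infty$, the strong law of large numbers drives the exponent to $-\infty$ almost surely, so the right-hand side tends to $0$ and $\PP_p(T_\xi\text{ percolates}) = 0$ almost surely.

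The main obstacle is upgrading statements about expectations (such as $\EE q(T_\xi, p) = q(p)$ and $\EE \tilde q(T_\xi, p) = p'$) to almost-sure control of the random quantities $\PP_p(T_\xi\text{ percolates})$ and $\PP_p(F \neq \emptyset \mid T_\xi)$. This is resolved by conditioning on $T_\xi$ and exploiting independence coming from disjoint subtrees rooted at a common depth: in the first direction it collapses to a countable-additivity argument over Ulam--Harris labels, while in the second it promotes the positive mean $p'$ to an almost-sure divergence of the sum $\sum_{v \in L_n}\tilde q(T_v, p)$, driving the Chernoff-type estimate to zero.
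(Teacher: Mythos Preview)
Your proof is correct and rests on the same two ideas as the paper's: for $q(p)>0$, the ``stronger'' fort at a non-root vertex (at most $r-2$ non-fort children, so that adding the parent keeps it a fort in $T_\xi$), whose annealed probability is exactly $p'=h_{r-1,p}(q(p))>0$; and the independence of these events across $v\in L_n$ combined with $|L_n|\ge r^n$. The only difference is packaging. The paper works in the annealed measure and bounds
\[
\EE_{T_\xi}\bigl[\PP_p(\text{every vertex in }L_n\text{ eventually infected})\bigr]\le (1-p')^{r^n}\to 0,
\]
which, since $\PP_p(T_\xi\text{ percolates})\ge0$, immediately gives the almost-sure conclusion. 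You instead condition on $T_\xi$ and argue that $\sum_{v\in L_n}\tilde q(T_v,p)\to\infty$ almost surely. Your appeal to the strong law is not quite literal, because the families $\{\tilde q(T_v,p):v\in L_n\}$ for different $n$ are not fresh i.i.d.\ samples (the subtrees nest). The clean fix is Chebyshev plus Borel--Cantelli: conditionally on the first $n$ levels, the sum has mean $|L_n|\,p'$ and variance at most $|L_n|/4$, so $\PP\bigl(\sum_{v\in L_n}\tilde q(T_v,p)<|L_n|\,p'/2\bigr)\le (p'^2|L_n|)^{-1}\le (p'^2 r^n)^{-1}$, which is summable.

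For the direction $q(p)=0$, the paper argues by contrapositive: if percolation fails with positive joint probability then some fixed Ulam--Harris label is a fort root with positive probability, forcing $h_{r-1,p}(q(p))>0$ or $q(p)>0$. Your direct route---observing that $q(T_v,p)=0$ for every label $v$ almost surely and that membership in the eternally healthy set $F$ forces $F\cap T_v$ to be a healthy fort of $T_v$---is equally valid and arguably tidier.
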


\begin{proof}
 If $p=1$ then $q(p)=0$ and clearly $\PP_p(T \text{ percolates})=1$. So assume that $p < 1$.

First, assume that $q(p)>0$, with the aim of showing that
\[
 \EE_{T_\xi}(\PP_p(T_\xi \text{ percolates}))=0.
\]

By equation \eqref{eq:root_healthy_tree}, there is a $\delta >0$ be such that, for every vertex $v$, 
\[
\EE_{T_\xi}(\PP_p(v \text{ is in a healthy } (r-1)\text{-fort}\mid v \in T_\xi)) \geq \delta.
\]
  Since $\xi \geq r$ almost surely, at level $t$ in the tree, there are
at least $r^t$ vertices. 
The events that these vertices are roots of healthy $(r-1)$-forts are independent;
thus, for every $t$
\[
 \EE_{T_\xi}(\PP_p(\text{every vertex of } T_\xi \text{ at level } t \text{ is eventually infected})) \leq (1-\delta)^{r^t} \to 0
\]
as $t \to \infty$.  Thus, $\EE_{T_\xi}(\PP_p(T_\xi \text{ percolates})) = 0$ and hence the set
\[ 
\{T \mid \PP_p(T \text{ percolates})>0\}
\]
 has measure $0$.

On the other hand, suppose that $\EE_{T_\xi}(\PP_p(T_\xi \text{ percolates}))<1$ in hopes of showing that $q(p) >0$.  Then, the set of trees
\[
 \{T \mid \PP_p(T \text{ percolates})<1\} = \{T \mid \PP_p(T \text{ contains a healthy } (r-1) \text{-fort}) >0\}
\]
has positive measure.

Even though the number of infinite trees is uncountable, each tree has
only a countable number of vertices 
 and these can be thought of as a subset of 
a common countable set of vertices.  
Then, there is a vertex $v$ for which, conditioning on $v$ being a vertex of the tree,
\[
 \EE_{T_\xi}(\PP_p(v \text{ is the root of a healthy } (r-1) \text{-fort})\mid v \in V(T_\xi)) > 0.
\]
 That is, either $q(p) >0$ (if $v = v_0$) or $h_{r-1, p}(q(p))>0$.  In either case, $q(p)>0$, which completes the proof.
\end{proof}

Thus, combining Claim \ref{claim:fixedpt} and Lemma \ref{lem:q(p)}, Lemma \ref{lem:pc_hrp} holds and the critical probability is given by
\begin{equation}\label{eq:crit_prob}
p_c(T_\xi,r) = \inf\{p \mid x = h_{r,p}(x) \text{ has no solution } x \in (0,1]\}.
\end{equation}
With equation \eqref{eq:crit_prob} in mind, the following functions are defined.

\begin{definition}\label{def:G_xi}
For each $r \geq 2$ and $k \geq r$, define
\[
g_k^r(x) = \frac{\PP(\Bin(k, 1-x) \leq r-1)}{x} = \sum_{i = 0}^{r-1} \binom{k}{i} x^{k-i-1}(1-x)^i
\]
and for any offspring distribution $\xi$, set 
\[
G_{\xi}^r(x) = \sum_{k \geq r} \PP(\xi=k) g_k^r(x).
\]
\end{definition}

Using equation \eqref{eq:crit_prob}, the critical probability for $T_\xi$ can be characterized in terms of the function $G_\xi^r(x)$.  Note that for $p=0$, the equation $h_{r,p}(x)=x$ has a solution at $x=1$ and for $p=1$, the only solution to $h_{r,p}(x) = x$ is $x=0$.  Since $h_{r,p}(x) = x (1-p) G_{\xi}^r(x)$, then for $p<1$, $x
= h_{r,p}(x)$ has a solution in $(0,1]$ if{f} $G_{\xi}^r(x) = \frac{1}{1-p}$ has a solution in $(0,1]$. Note that we have $G^r_\xi(1)=1$, and so for $p>0$,
$(1-p)G^r_\xi(1) < 1$. Since $G^r_{\xi}(x)$ is continuous, if $p<p_c(T_\xi,r)$, then $\sup_{x \in (0,1]} G_\xi^r(x) \geq \frac{1}{1-p}$ and if $p_c(T_\xi, r)<p<1$, then for every $x \in (0,1]$, $G_\xi^r(x) < \frac{1}{1-p}$.  Thus, the critical probability for $r$-neighbour bootstrap percolation on the Galton--Watson tree $T_{\xi}$ is, almost surely, given by
\begin{equation}
\label{eq:p_cWithMaxG}
 p_c(T_{\xi}, r) = 1- \frac{1}{\max_{x \in [0,1]} G_{\xi}^r(x)}.
\end{equation}
Since $\max_{x \in [0,1]} G_\xi^r(x) \geq 1$, this implies the following useful estimate for the critical probability
\begin{equation}
\label{eq:p_cWithMaxG2}
 p_c(T_{\xi},r) \leq \max_{x \in [0,1]} G_{\xi}^r(x) -1.
\end{equation}

Before proceeding, a few facts about the functions $g_k^r(x)$ are noted.  First, for all $r \geq 2$,
\begin{equation}
\begin{split}
 \label{eq:grr}
g_r^r(x) & = \frac{\PP(\Bin(r, 1-x) \leq r-1)}{x} = \frac{1-(1-x)^r}{1-(1-x)} \\
& = 1+ (1-x) + (1-x)^2 + \ldots + (1-x)^{r-1} = \sum_{i = 0}^{r-1} (1-x)^i.
\end{split}
\end{equation}
For any $k > r$, $\PP(\Bin(k, 1-x) \leq r) = \PP(\Bin(k , 1-x) \leq r -1)+ \PP(\Bin(k, 1-x)=r)$ and hence
\begin{equation}
\label{E:r-recursion}
g_k^{r+1}(x) = g_k^r(x) + \binom{k}{r} x^{k-r-1}(1-x)^r.
\end{equation}
For each fixed $r \geq 2$ and $k \geq r$,
\begin{equation}\label{E:k-recursion}
g_{k+1}^r(x) - g_k^r(x) = -\binom{k}{r-1}x^{k-r}(1-x)^r.
\end{equation}
Indeed, to prove equation \eqref{E:k-recursion}, let $X \sim \Bin(k, 1-x)$ and $Y \sim \Bin(1, 1-x)$ be independent.  Then, $X+Y \sim \Bin(k+1, 1-x)$ and so
\begin{align*}
 x g_k^r(x) &=\PP(X \leq r-1)\\
	    &=\PP(X+Y \leq r-1) + \PP(Y=1 \text{ and } X=r-1)\\
	    &=x g_{k+1}^r(x) + (1-x) \cdot \binom{k}{r-1} (1-x)^{r-1} x^{k-r+1}\\
	    &=x\left(g_{k+1}^r(x) + \binom{k}{r-1} (1-x)^r x^{k-r} \right),
\end{align*}
which shows equation \eqref{E:k-recursion}.
Thus, by equation \eqref{E:k-recursion}, for any $k \geq r$,
\begin{equation}\label{E:gk-gr}
 g_{k+1}^r(x) = g_r^r(x) - \sum_{i=r}^k \binom{i}{r-1}x^{i-r}(1-x)^r \leq g_r^r(x).
\end{equation}
In particular, note that $G_{\xi}^r(x) \leq g_r^r(x)$.

One simple example of a Galton--Watson tree occurs when the offspring distribution is constant.  When $\xi \equiv b$, $T_\xi$ is the $b$-ary tree, which has the same critical probability as the $(b+1)$-regular tree, $T_b$.  Note that, in this case, $G_\xi^r(x) = g_b^r(x)$.  For $r \geq 2$, fixed, the asymptotic value of $p_c(T_b,r)$ as $b$ tends to infinity is included here for completeness.

\begin{lemma}
\label{lem:pc_regtree}
For each $r \geq 2$, $p_c(T_b,r) = \left(1-1/r\right) \left(\frac{(r-1)!}{b^r} \right)^{1/(r-1)}(1+o(1))$ as $b \to \infty$.
\end{lemma}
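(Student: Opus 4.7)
The plan is to use equation \eqref{eq:p_cWithMaxG}: since $\xi \equiv b$ gives $G_\xi^r = g_b^r$, we have $p_c(T_b,r) = 1 - 1/M_b$ where $M_b := \max_{x \in [0,1]} g_b^r(x)$. As we shall see, $M_b \to 1$, and hence $p_c(T_b,r) = (M_b - 1)(1 + o(1))$; it therefore suffices to show
\[ M_b - 1 = \left(1 - \tfrac{1}{r}\right)\left(\tfrac{(r-1)!}{b^r}\right)^{1/(r-1)}(1 + o(1)). \]
Setting $y = 1-x$ and using $g_b^r(x) = \PP(\Bin(b,y) \leq r-1)/(1-y)$, a short calculation yields
\[ g_b^r(1-y) - 1 = \frac{y - \PP(\Bin(b,y) \geq r)}{1-y} =: \phi_b(y), \]
so $M_b - 1 = \max_{y \in [0,1]} \phi_b(y)$. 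The problem now reduces to a binomial tail estimate.

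In the regime $y \to 0$ with $by \to 0$, the binomial tail $\PP(\Bin(b,y) \geq r)$ is dominated by its leading term:
\[ \PP(\Bin(b,y) \geq r) = \binom{b}{r} y^r (1-y)^{b-r} + \sum_{j \geq r+1} \binom{b}{j} y^j (1-y)^{b-j} = \binom{b}{r} y^r (1 + o(1)), \]
since $(1-y)^{b-r} \to 1$ when $by = o(1)$ and the ratio of consecutive terms in the tail is of order $by = o(1)$. Combining this with $(1-y)^{-1} = 1 + O(y)$ yields $\phi_b(y) = \tilde\psi(y)(1 + o(1))$ where $\tilde\psi(y) := y - \binom{b}{r} y^r$. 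The function $\tilde\psi$ is strictly concave on $(0,\infty)$ with unique critical point
\[ y^* = \left(r \binom{b}{r}\right)^{-1/(r-1)} = \left(\tfrac{(r-1)!}{b^r}\right)^{1/(r-1)}(1 + o(1)), \]
at which $\tilde\psi(y^*) = (1 - 1/r)\,y^*$. Since $b y^* \sim b^{-1/(r-1)} \to 0$ as $b \to \infty$, the approximation applies at $y^*$.

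Finally, one verifies that $y^*$ is the location of the global maximum of $\phi_b$. For any fixed $\epsilon > 0$ and $y \in [\epsilon, 1]$, a Chernoff bound gives $\PP(\Bin(b,y) \geq r) \to 1$ as $b \to \infty$, so $\phi_b(y) \to -1$. For $y \to 0$ with $by$ bounded below by a positive constant, $\PP(\Bin(b,y) \geq r)$ is bounded below by a positive constant while $y \to 0$, again making $\phi_b$ negative. Hence the positive part of $\phi_b$ lives in the regime $by \to 0$, where $\phi_b \sim \tilde\psi$ and strict concavity pins the maximum at $\sim y^*$. The main technical obstacle is a uniform error estimate $|\phi_b(y) - \tilde\psi(y)| = o(y^*)$ on a suitable neighbourhood of $y^*$; this follows because the next binomial-tail term is $\binom{b}{r+1} y^{r+1} = O(by) \cdot \binom{b}{r} y^r$, which at $y \asymp y^*$ is $o(y^*)$ since $b y^* = o(1)$. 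Combining the estimates, $M_b - 1 = (1 - 1/r)\,y^*\,(1 + o(1))$, giving the claim.
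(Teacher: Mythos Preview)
Your approach is correct and essentially the same as the paper's: both rewrite $g_b^r(1-y)-1=(y-\PP(\Bin(b,y)\geq r))/(1-y)$, approximate the binomial tail by its leading term $\binom{b}{r}y^r$ near the optimum, and identify the maximizer $y^*\sim((r-1)!/b^r)^{1/(r-1)}$ with value $(1-1/r)y^*$. The only difference is organizational: the paper rules out large $y$ via three explicit ranges (Chebyshev for $y\geq 2r/b$, a direct estimate for $(r^r e^{4r}/b^r)^{1/(r-1)}<y<2r/b$, then the critical-point computation for small $y$), whereas your two-regime argument (``$by$ bounded below'' forces $\phi_b<0$, hence the maximizer has $by_b\to 0$) is slightly slicker but amounts to the same thing.
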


\begin{proof}
Fix $r \geq 2$ and $b \geq r$.  The critical probability for $T_b$ in $r$-neighbour bootstrap percolation is given by
\begin{equation}
\label{eq:p_cWithMaxG_r}
p_c(T_b,r) = 1 - \frac{1}{\max_{x \in [0,1]} g_b^r(x)} = \frac{\max_{x \in [0,1]} g_b^r(x) -1}{\max_{x \in [0,1]} g_b^r(x)}.
\end{equation}
For a lower bound on the critical probability, note that 
\[
g_b^r(1-y) = \frac{\PP(\Bin(b,y) \leq r-1)}{1-y} = \frac{1-\PP(\Bin(b,y) \geq r)}{1-y} \geq \frac{1- \binom{b}{r} y^r}{1-y} \geq \frac{1-\frac{(by)^r}{r!}}{1-y}.
\]
Set $y_0 = \left(\frac{(r-1)!}{b^r} \right)^{1/(r-1)}$ so that $b^r y_0^{r-1} = (r-1)!$ and consider
\[
g_b^r(1-y_0) -1\geq \frac{y_0 - \frac{(by_0)^r}{r!}}{1-y_0} = \frac{y_0\left(1-\frac{1}{r} \right)}{1-y_0}. 
\]
Then, a lower bound on the critical probability is given by
\[
p_c(T_b,r) \geq \frac{(1-1/r)\frac{y_0}{1-y_0}}{1+(1-1/r)\frac{y_0}{1-y_0}} = \frac{(1-1/r)y_0}{1-\frac{y_0}{r}} \geq \left(1- \frac{1}{r} \right)y_0 = \left(1 - \frac{1}{r} \right) \left(\frac{(r-1)!}{b^r} \right)^{1/(r-1)}.
\]

For an upper bound of the function $g_b^r(1-y)$, consider separately different ranges for the value of $y$.  Using Chebyshev's inequality, one can show that if $y \geq 2r/b$, then $g_b^r(1-y) \leq 1$.   

Consider the function 
\begin{equation}\label{eq:ax}
 (1-y)(g_b^r(1-y)-1) =  \PP(\Bin(b,y)\leq r-1) -(1-y)=y- \PP(\Bin(b,y) \geq r).
\end{equation}
Suppose that $b > e^{4r}r$  and consider $y$ such that $(r^r
e^{4r}b^{-r})^{1/(r-1)} < y < 2r/b$.  Then  $2r/b < 1/2$ and
\begin{align*}
y-\PP(\Bin(b,y) \geq r) &\leq y- \binom{b}{r} y^r (1-y)^{b-r}\\
	      &\leq y-\frac{b^r}{r^r} y^r e^{-2yb} \leq y-y\frac{b^r y^{r-1}}{r^r} e^{-4r}\\
		&=y\left(1 - y^{r-1} \frac{b^r }{e^{4r} r^r} \right) < 0.
\end{align*}
Consider now 
$y \le \left(\frac{r^r e^{4r}}{b^r} \right)^{1/(r-1)}$. Using equation \eqref{eq:diff_binom_prob} with $y$ in place of $1-x$, the maximum
value for $(1-y)(g_b^r(1-y)-1)$ occurs at $y_1$ with $\PP(\Bin(b-1,
y_1)=r-1) = \frac{1}{b}$ and hence $\binom{b}{r} y_1^{r-1} (1-y_1)^{b-r} =
1/r$.  Thus, 
\begin{equation}
\label{eq:boundOnEnumerator}
 y - \PP(\Bin(b,y) \geq r) \leq y_1- \PP(\Bin(b,y_1) = r) = y_1\left( 1-\frac{1}{r}\right).
\end{equation}
By the choice of $y_1$,
\begin{equation}
\begin{split}
\label{eq:boundOnY_1}
y_1^{r-1}
	& = \frac{1}{b\binom{b-1}{r-1}} (1-y_1)^{-(b-r)} \\ 
	& \leq \frac{(r-1)!}{b^r} \frac{b^r}{b(b-1)\ldots (b-r+1)} e^{2y_1 b} \\
	& = \frac{(r-1)!}{b^r} (1+o(1)).
\end{split}
\end{equation}
Thus, by 
\eqref{eq:ax}, \eqref{eq:boundOnEnumerator} and \eqref{eq:boundOnY_1}, 
\begin{align}
\max_{y\in[0,1]}(g_b^r(1-y)-1)
&\leq \frac{1}{1-\left(\frac{r^r e^{4r}}{b^r} \right)^{1/(r-1)}}\left(1-\frac{1}{r}\right) y_1 \notag\\
     &\leq \left(1-\frac{1}{r}\right) \left(\frac{(r-1)!}{b^r} \right)^{1/(r-1)} (1+o(1)). \label{eq:ub_g-1}
\end{align}
and the upper bound on $p_c(T_b,r)$ follows from \eqref{eq:p_cWithMaxG_r}.
\end{proof}

\subsection{Bounds for $f_r^{GW}\!(b)$}\label{subsec:small_crit_prob}

With the definitions from section \ref{subs:GW_defs}, we are now ready to prove
Theorem \ref{thm:branchingBound}:  For every $r \geq 2$ there are positive constants
$c_r$ and $C_r$ so that for every $b \geq r$, 
\[
 \frac{c_r}{b}e^{-\frac{b}{r-1}} \le f_r^{GW}\!(b) \le C_r e^{-\frac{b}{r-1}}. 
\]
 The proof of Theorem \ref{thm:branchingBound} is given in two parts.  The lower bound for $f_r^{GW}\!(b)$ is given in Lemma \ref{lemma:pcGWlowerBd}, to come, by examining properties of the function $G_{\xi}^r(x)$.  The upper bound for $f_r^{GW}\!(b)$ is given in Lemma \ref{lemma:pcGWupperBd} by producing a family of Galton--Watson trees with fixed branching number and small critical probability for $r$-neighbour bootstrap percolation.

\begin{lemma}
\label{lemma:pcGWlowerBd}
For every $r \geq 2$ and for any offspring distribution $\xi$ with
$\EE(\xi)=b \ge r$, 
\[
p_c(T_{\xi}, r) \geq \frac{e^{-\frac{r-2}{r-1}}}{b} e^{-\frac{b}{r-1}}.
\]
\end{lemma}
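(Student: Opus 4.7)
The argument should rest on the characterization $p_c(T_\xi,r) = 1 - 1/\max_{x\in[0,1]} G_\xi^r(x)$ from \eqref{eq:p_cWithMaxG}; equivalently, to prove $p_c(T_\xi,r) \geq c_0$ with $c_0 := \frac{e^{-(r-2)/(r-1)}}{b}e^{-b/(r-1)}$, it suffices to exhibit a single $x_*\in[0,1]$ (depending on $b$ and $r$, but not on $\xi$) at which
\[
G_\xi^r(x_*) \;\geq\; \frac{1}{1-c_0}.
\]

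Using the equivalent formula $G_\xi^r(1-y) = \EE[\PP(\Bin(\xi,y)\le r-1)]/(1-y)$, my plan is to pick $y_* \asymp 1/(r-1)$ (so that $(1-y_*)^b \asymp e^{-b/(r-1)}$) and lower bound the numerator uniformly in $\xi$. The key tool is Jensen's inequality: since $s\mapsto s^k$ is convex in $k$ for each $s\in(0,1)$, one has $\EE[(1-y)^\xi] \geq (1-y)^b$. Expanding $\PP(\Bin(\xi,y)\le r-1) = \sum_{i=0}^{r-1}\EE[\binom{\xi}{i}y^i(1-y)^{\xi-i}]$ and retaining the $i=0$ and $i=r-1$ terms (where for the latter $\binom{\xi}{r-1}\ge r$ since $\xi\ge r$ almost surely yields $\EE[\binom{\xi}{r-1}(1-y)^{\xi-r+1}] \ge r(1-y)^{b-r+1}$) produces
\[
G_\xi^r(1-y) \;\geq\; (1-y)^{b-1} + r y^{r-1}(1-y)^{b-r}.
\]
Evaluating at $y_* = 1/(r-1)$ (with a small adjustment for $r=2$, for instance $y_* = 1-e^{-1/(r-1)}$, to avoid the degenerate choice $y_*=1$), and carefully balancing the two terms on the right, one hopes to combine this with the Taylor expansion $G_\xi^r(1-y) = 1 + y + O(y^2)$ near $y=0$ (valid for any $\xi$ with $\xi\ge r$) to deduce $\max_x G_\xi^r(x) \geq 1/(1-c_0)$; the constant $e^{-(r-2)/(r-1)}$ would arise from the factor $y_*^{r-1}(1-y_*)^{-1}$ at the optimum, and the $1/b$ as the algebraic loss in the optimization.

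The main obstacle is the uniformity in $\xi$: beyond $\EE\xi=b$ and $\xi\ge r$, nothing is assumed, and standard Markov-type bounds such as $\PP(\xi \geq L) \leq b/L$ combined with Binomial tail estimates yield $\PP(\Bin(\xi,y)\ge r) \leq O(by/r)$, which exceeds $y$ as soon as $b \geq r$ and therefore gives no useful information. Bypassing Markov requires exploiting the convexity of $k\mapsto(1-y)^k$ (Jensen) rather than moment inequalities on $\xi$ itself, and accounts for why the bound only decays like $e^{-b/(r-1)}$ rather than the polynomial rate valid for Jensen-friendly distributions like Poisson offspring. A second subtlety is that the two-term lower bound above by itself cannot exceed $1$ for $y$ of order $1/(r-1)$ (both summands are then strictly less than $1$), so the proof must delicately interlink this macroscopic estimate with the $y\to 0$ local expansion in order to locate the correct $x_*$ that achieves the exponentially small $c_0$.
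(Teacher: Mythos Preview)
Your proposal has a genuine gap that cannot be patched by the ``interlinking'' you gesture at. The Jensen-based lower bound you derive,
\[
G_\xi^r(1-y) \;\geq\; (1-y)^{b-1} + r\,y^{r-1}(1-y)^{b-r},
\]
is at most $1$ for every $y\in(0,1]$ as soon as $b$ is even moderately large (check the derivative at $y=0$: it equals $-(b-1)$ for $r\ge 3$, and $3-b$ for $r=2$). So this bound can never witness $\max_x G_\xi^r(x) > 1$, which is exactly what you need. Your fallback, the local expansion $G_\xi^r(1-y) = 1 + y + O(y^2)$, does not help either: the coefficient of $y^2$ involves $\EE(\xi(\xi-1))$, which is completely uncontrolled by the hypothesis $\EE\xi = b$. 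For distributions with large second moment the parabola $1+y-Cy^2$ peaks at $1+1/(4C)$, and $C$ can be made arbitrarily large while keeping $\EE\xi=b$ fixed. There is no way to combine a bound that never exceeds $1$ with a Taylor expansion whose error you cannot control to produce a lower bound of the form $1/(1-c_0)$ uniform in $\xi$.

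The paper's proof takes a completely different route that avoids pointwise evaluation altogether. It computes the weighted integral
\[
\int_0^1 \frac{g_r^r(x) - G_\xi^r(x)}{(1-x)^2}\,dx \;=\; \frac{\EE\xi - r}{r-1} + \EE(H_{\xi-r}),
\]
which depends on $\xi$ only through $\EE\xi$ and $\EE(H_{\xi-r}) \le \log b + 1$. On the other hand, writing $M=\max_x G_\xi^r(x)$ and choosing $y$ with $g_r^r(1-y)=M$, the same integral is bounded below by $\int_0^{1-y}\frac{g_r^r(x)-M}{(1-x)^2}\,dx$, which a direct calculation shows is at least $-\log p_c - 1$. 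Comparing the two gives $-\log p_c \le \frac{b-1}{r-1} + \log b + 1$, i.e.\ the claimed bound. The integral against the singular weight $(1-x)^{-2}$ is precisely what converts the global quantity $\max G_\xi^r$ into something controllable by the first moment alone; this is the idea your pointwise approach is missing.
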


\begin{proof}
In what follows, we shall need to consider integrals of functions related to $g_k^r(x)$ and so recall from the definition of the beta function that for all $a , b \in \ZZ^+$,
\[
\int_0^1 x^a (1-x)^b\ dx = \frac{a!\  b!}{(a+b+1)!}.
\]
By equation \eqref{E:gk-gr}, for any $k \geq r$, using $H_\ell = \sum_{i=1}^{\ell} \frac{1}{i}$ to denote the $\ell$-th harmonic number,
\begin{equation}
\label{eq:g_k^rIntegral}
\begin{split}
\int_0^1 \frac{g_r^r(x) - g_{k}^r(x)}{(1-x)^2}\ dx
	&=\sum_{i=r}^{k-1} \binom{i}{r-1} \int_0^1 x^{i-r}(1-x)^{r-2}\ dx\\
	&=\sum_{i=r}^{k-1} \binom{i}{r-1} \frac{(i-r)! (r-2)!}{(i-1)!}\\
	&=\sum_{i=r}^{k-1} \frac{1}{r-1} \frac{i}{i-r+1}\\
	&= \frac{k - r}{r-1} +H_{k -r}.
\end{split}
\end{equation}
Therefore, for any offspring distribution $\xi$, since $\xi \geq r$ almost surely,
\begin{equation}
\label{eq:upperBound}
\begin{split}
\int_0^1 \frac{g_r^r(x) - G_{\xi}^r(x)}{(1-x)^2}\ dx
	&= \sum_{k \geq r}\PP(\xi=k) \left(\frac{k- r}{r-1} +H_{k -r}\right)\\
	 &= \frac{\EE{\xi}}{r-1} + \EE(H_{\xi-r})- \frac{r}{r-1}.
\end{split}
\end{equation}
On the other hand, let $M = \max_{x \in [0,1]} G_{\xi}^r(x)$.  Then by equation \eqref{eq:p_cWithMaxG}, $p_c =
p_c(T_{\xi}, r) = 1-\frac{1}{M}$.  Note that, since $g_r^r(x)$ is decreasing
and continuous, $g_r^r(0)=r$, $g_r^r(1)=1$ and $G_{\xi}^r(x) \leq g_r^r(x)$,
we have $M \in [1,r]$ and there is a unique $y \in [0,1]$ with $g_r^r(1-y) =
M$. Then, by \eqref{eq:grr},
\begin{align*}
\int_0^{1-y} \frac{g_r^r(x) - M}{(1-x)^2}\ dx
	&=\left\{-\frac{M-1}{1-x} - \log(1-x) - \sum_{i=2}^{r-1} \frac{(1-x)^{i-1}}{i-1}\right\}_{x=0}^{1-y}\\
	&= (M-1)(1-1/y) -\log(y) + \sum_{i=1}^{r-2} \frac{1-y^i}{i}.
\end{align*}
Note that $(M-1)(1-1/y) = \frac{(y+y^2 + \ldots + y^{r-1})(y-1)}{y} = y^{r-1}-1$.  Thus, the above expression can be simplified, as
\begin{equation}
\label{eq:integral}
\begin{split}
\int_0^{1-y} \frac{g_r^r(x) - M}{(1-x)^2}\ dx
	& = y^{r-1} -1 - \log(y) + \sum_{i=1}^{r-2} \frac{1-y^i}{i}\\
	& \geq y^{r-1}-1 -\log(y). 
\end{split}
\end{equation}
Now, using the definition of $y$,
\begin{equation}
\label{eq:pc}
p_c = 1 - \frac{1}{M} = \frac{M-1}{M} = \frac{y+y^2+\ldots + y^{r-1}}{1+y+y^2 + \ldots + y^{r-1}} = \frac{y(1-y^{r-1})}{1-y^r}.
\end{equation}
Note that for any $y \in [0,1)$, 
\[
\log \left(\frac{1-y^r}{1-y^{r-1}} \right) \leq \log \left(\frac{1-y^{2r-2}}{1-y^{r-1}} \right) = \log(1+y^{r-1}) \leq y^{r-1}
\]
and from this, using \eqref{eq:pc}, we obtain
\[
y^{r-1}-\log(y) \geq \log\left(\frac{1-y^r}{1-y^{r-1}}\right) - \log(y) =-\log\left(\frac{y(1-y^{r-1})}{1-y^r} \right) = -\log p_c. 
\]
Since $g_r^r(x) - G_{\xi}^r(x) \geq 0$ then, using \eqref{eq:upperBound}
 and \eqref{eq:integral},
\[
-\log p_c -1 \leq \int_0^{1-y}\frac{g_r^r(x) - M}{(1-x)^2}\ dx \leq \int_0^1 \frac{g_r^r(x) - G_{\xi}^r(x)}{(1-x)^2}\ dx = \frac{\EE{\xi}}{r-1} + \EE(H_{\xi-r}) - \frac{r}{r-1}
\]
and hence
\begin{equation}  \label{eq:xb}
p_c(T_{\xi},r) \geq \exp\left(-\frac{\EE(\xi)-1}{r-1}-\EE(H_{\xi-r}) \right)
\geq \exp\left(-\frac{b-1}{r-1}-\EE(H_{\xi}) \right).
\end{equation}
Using the inequality $H_n \leq \log n + 1$ for $n\ge1$
and the concavity of the logarithm function we see that  $\EE(H_\xi)\le \log b+1$ and thus
 \[
p_c(T_{\xi},r) \geq \exp\left(-\frac{r-2}{r-1}\right) \frac{e^{-\frac{b}{r-1}}}{b},
\]
completing the proof of the lemma.
\end{proof}

By Lemma \ref{lemma:pcGWlowerBd}, the  lower bound in Theorem \ref{thm:branchingBound} holds with $c_r = e^{-\frac{r-2}{r-1}}$.

Next let us prove that there exists $C_r>0$ so that $f_r^{GW}\!(b) \leq C_r  e^{\frac{b}{r-1}}$ when $b$ is sufficiently large. We shall do this by first considering a sequence of offspring distributions that are shown to have critical probability $0$.

For each $r \geq 2$, define an offspring distribution, denoted by $\xi_r$ as follows.  For every $k \geq r$, set
\[
\PP(\xi_r = k) = \frac{r-1}{k(k-1)}.
\]
Note that for any $r$, $\EE(\xi_r) = \infty$.  In Lemma \ref{lemma:pcGWupperBd} below, it is shown that, given $b > r$ sufficiently large, the distribution $\xi_r$ can be modified by `pruning' to obtain the appropriate critical probability and mean $b$.
\begin{claim}
\label{claim:infiniteE}
 For each $r \geq 2$, and for all $x \in [0,1]$,  $G_{\xi_r}^r(x) = 1$.
\end{claim}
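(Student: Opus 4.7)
The plan is to exploit the telescoping nature of both the probabilities $\PP(\xi_r = k) = \frac{r-1}{k(k-1)}$ and the recursion \eqref{E:k-recursion} that expresses $g_k^r(x)$ in terms of $g_r^r(x)$. Specifically, by \eqref{E:gk-gr},
\[
  g_k^r(x) = g_r^r(x) - (1-x)^r \sum_{j=r}^{k-1}\binom{j}{r-1}x^{j-r},
\]
so substituting into the definition of $G_{\xi_r}^r(x)$ and using $\sum_{k\ge r}\PP(\xi_r = k)=1$ (which follows from the telescoping sum $\sum_{k\ge r}\frac{1}{k(k-1)}=\frac{1}{r-1}$) gives
\[
  G_{\xi_r}^r(x) \;=\; g_r^r(x) \;-\; (1-x)^r\sum_{k>r}\frac{r-1}{k(k-1)}\sum_{j=r}^{k-1}\binom{j}{r-1}x^{j-r}.
\]

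Next I would swap the order of summation. Since $\sum_{k>j}\frac{1}{k(k-1)} = \frac{1}{j}$ by telescoping, the double sum collapses to $\sum_{j\ge r}\binom{j}{r-1}\frac{r-1}{j}x^{j-r}$. The coefficient simplifies via $\binom{j}{r-1}\frac{r-1}{j}=\binom{j-1}{r-2}$, so after reindexing $\ell = j - r$ we are left with evaluating the generating function $\sum_{\ell\ge 0}\binom{\ell+r-1}{r-2}x^{\ell}$. A standard identity (writing $\binom{\ell+r-1}{r-2}=\binom{\ell+r-1}{\ell+1}$ and shifting the index) gives
\[
  \sum_{\ell\ge 0}\binom{\ell+r-1}{r-2}x^{\ell} \;=\; \frac{1}{x}\left(\frac{1}{(1-x)^{r-1}}-1\right).
\]

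Plugging this back in and using $g_r^r(x)=\frac{1-(1-x)^r}{x}$ from \eqref{eq:grr}, everything telescopes: the expression becomes
\[
  G_{\xi_r}^r(x) \;=\; \frac{1-(1-x)^r}{x} \;-\; \frac{(1-x)-(1-x)^r}{x} \;=\; \frac{1-(1-x)}{x} \;=\; 1,
\]
valid for $x\in(0,1]$, and extending to $x=0$ by continuity (noting that only the $k=r$ term contributes there, giving $\PP(\xi_r=r)\cdot g_r^r(0)=\frac{r-1}{r(r-1)}\cdot r = 1$).

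The main obstacle is not conceptual but bookkeeping: I need to confirm that the swap of summation is justified (each term is nonnegative on $[0,1]$, so Tonelli applies) and that the binomial-coefficient manipulation $\binom{j}{r-1}\frac{r-1}{j}=\binom{j-1}{r-2}$ together with the generating-function identity is carried out correctly. Once those algebraic details are in place, the proof reduces to a one-line cancellation.
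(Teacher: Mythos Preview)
Your argument is correct. The swap of summation is justified by nonnegativity, the identity $\binom{j}{r-1}\tfrac{r-1}{j}=\binom{j-1}{r-2}$ is right, and the generating-function evaluation is the standard negative-binomial identity; the final cancellation goes through exactly as you wrote. One tiny quibble: the generating-function step is really valid for $x\in(0,1)$, not at $x=1$ (where $(1-x)^{-(r-1)}$ blows up), but $G_{\xi_r}^r(1)=1$ is immediate from $g_k^r(1)=1$, so both endpoints are handled by continuity or direct inspection.

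The paper's proof is organized differently: it proceeds by induction on $r$, using the recursion \eqref{E:r-recursion} in the threshold $r$ (namely $g_k^{r+1}=g_k^r+\binom{k}{r}x^{k-r-1}(1-x)^r$) together with the relation between the distributions $\xi_r$ and $\xi_{r+1}$, rather than the recursion \eqref{E:k-recursion} in $k$ that you used. Your approach is a direct computation that avoids induction entirely and reduces everything to one classical power-series identity; the paper's approach is perhaps more structural, making explicit why the particular choice $\PP(\xi_r=k)=\tfrac{r-1}{k(k-1)}$ propagates from $r$ to $r+1$. Both are short; yours is arguably the more self-contained of the two.
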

\begin{proof}
We apply induction on $r$. First, for $r=2$,
\[
\begin{split}
 G^2_{\xi_2}(x) & = \sum_{k \geq 2} \frac{1}{k(k-1)} \left( k x^{k-2}-(k-1) x^{k-1} \right) \\
		& = 1-\sum_{k \geq 2} \left( \frac{1}{k(k-1)} (k-1) x^{k-1} - \frac{1}{(k+1)k} (k+1) x^{(k+1)-2} \right) \\
		& = 1-\sum_{k \geq 2} 0 = 1,
\end{split}
\]
as claimed. Turning to the induction step, assume that the Claim holds for $r \geq 2$: $G_{\xi_r}^r(x) = 1$ for $x \in [0,1)$. Then, for $x \in [0,1)$,
\begin{align*}
 G^{r+1}_{\xi_{r+1}}(x) & = \sum_{k \geq r+1} \frac{r}{k(k-1)} g^{r+1}_k(x)\\
& =\sum_{k \geq r+1} \frac{r}{k(k-1)} \left( g^{r}_k(x) + \binom{k}{r} x^{k-r-1} (1-x)^r \right) \qquad \text{(by \eqref{E:r-recursion})}\\
& =\frac{r}{r-1} \left( \sum_{k \geq r} \frac{r-1}{k(k-1)} g^{r}_k(x) - \frac{1}{r} g^{r}_r(x) \right) + \sum_{k \geq r+1} \frac{1}{r-1} \binom{k-2}{r-2}x^{k-r-1}(1-x )^r\\
& =\frac{r}{r-1} G^{r}_{\xi_{r}}(x) - \frac{1}{r-1} \left( g^{r}_r(x) - \frac{1-x-(1-x)^r}{x} \right) \\
& =\frac{r}{r-1} - \frac{1}{r-1} \left( \frac{1-(1-x)^r}{x} - \frac{1-x-(1-x)^r}{x} \right) \qquad \text{(by \eqref{eq:grr})}  \\
& =\frac{r}{r-1} - \frac{1}{r-1} = 1,
\end{align*}
so our claim holds for $r+1$, completing the proof.
\end{proof}

An immediate corollary of Claim \ref{claim:infiniteE} is that, for every $r \geq 2$, the Galton--Watson tree $T_{\xi_r}$ satisfies $p_c(T_{\xi_r}, r) = 0$.

\begin{lemma}\label{lemma:pcGWupperBd}
For every $r \geq 2$, there is a constant $C_r$ such that if $b \geq 
(r-1)\log(4er)$, then there is an offspring distribution $\eta_{r,b}$ with
$\EE(\eta_{r,b}) = b$ and 
\[
p_c(T_{\eta_{r,b}}, r) \leq C_r e^{-\frac{b}{r-1}}.
\]
\end{lemma}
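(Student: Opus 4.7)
The plan is to exhibit $\eta_{r,b}$ as a truncation of the distribution $\xi_r$ from Claim \ref{claim:infiniteE}, exploiting the identity $G_{\xi_r}^r(x)\equiv 1$ on $[0,1]$ so that $G_{\eta_{r,b}}^r(x)$ differs from $1$ only by a tail correction. Writing $p_k:=\PP(\xi_r=k)=\frac{r-1}{k(k-1)}$, I would set $\PP(\eta_{r,b}=k)=p_k$ for $r\le k\le N-1$ and distribute the remaining mass $\sum_{k\ge N}p_k=\frac{r-1}{N-1}$ between one or two integer atoms $M\ge N$, chosen so that $\EE(\eta_{r,b})=b$ exactly. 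Since the mean obtained this way sweeps out $[S(N),\infty)$ with $S(N):=(r-1)(H_{N-2}-H_{r-2})+\tfrac{N(r-1)}{N-1}$, and the estimate $H_{N-2}\le \log N+1$ gives $S(N)\le (r-1)\log N+O_r(1)$, it is feasible to pick $N$ as the largest integer satisfying $S(N)\le b$, which forces $N\ge c_r'\,e^{b/(r-1)}$ for some explicit $c_r'=c_r'(r)>0$.

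With $\eta_{r,b}$ in hand, the key computation is
\[
G_{\eta_{r,b}}^r(x)-1 \;=\; G_{\eta_{r,b}}^r(x)-G_{\xi_r}^r(x) \;=\; \sum_{k\ge N}\PP(\eta_{r,b}=k)\,g_k^r(x)\;-\;\sum_{k\ge N}p_k\,g_k^r(x),
\]
the $k\le N-1$ terms cancelling because the two distributions agree there. By \eqref{E:gk-gr} the sequence $\{g_k^r(x)\}_{k\ge r}$ is non-increasing, and all of the mass $\PP(\eta_{r,b}=k)$ on $\{k\ge N\}$ sits at values $\ge N$, so
\[
\sum_{k\ge N}\PP(\eta_{r,b}=k)\,g_k^r(x) \;\le\; g_N^r(x)\sum_{k\ge N}\PP(\eta_{r,b}=k) \;=\; \tfrac{r-1}{N-1}\,g_N^r(x) \;\le\; \tfrac{r(r-1)}{N-1},
\]
using $g_N^r(x)\le g_r^r(x)\le r$ from \eqref{eq:grr}. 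Discarding the non-negative subtracted sum and applying \eqref{eq:p_cWithMaxG2} yields $p_c(T_{\eta_{r,b}},r)\le \tfrac{r(r-1)}{N-1}$.

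Combining with the lower bound on $N$ gives $p_c(T_{\eta_{r,b}},r)\le C_r\,e^{-b/(r-1)}$, where the hypothesis $b\ge (r-1)\log(4er)$ is used precisely to push $N$ large enough that $N-1\ge \tfrac12 c_r' e^{b/(r-1)}$ and the implicit constants absorb cleanly into a single $C_r$. The only subtlety I anticipate is matching the continuous parameter $b$ exactly with integer choices of $M$, which is routinely handled by splitting mass between $M$ and $M+1$; the substantive content of the argument is driven entirely by Claim \ref{claim:infiniteE} and the monotonicity of $g_k^r(x)$ in $k$, so I do not expect any deeper obstacle.
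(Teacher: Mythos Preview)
Your proposal is correct and follows essentially the same strategy as the paper: truncate the distribution $\xi_r$ of Claim~\ref{claim:infiniteE}, use $G_{\xi_r}^r\equiv 1$ so that $G_{\eta}^r-1$ is a tail correction, and bound that correction by $\tfrac{r(r-1)}{N-1}$ via the monotonicity $g_k^r\le g_r^r\le r$. The one (minor) difference is where the leftover mass $\tfrac{r-1}{N-1}$ is placed: the paper puts it on the \emph{small} atoms $r$ and $2r+1$, which forces a short feasibility check for the mean constraint (hence the step back from $k_0$ to $k_1=k_0-2r$ and the verification that $K/A\in(r,2r+1]$), whereas you put it on a \emph{large} atom $M\ge N$, so the achievable means sweep out $[S(N),\infty)$ and matching $b$ is automatic. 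Your variant is marginally cleaner for exactly this reason; both yield the same bound $p_c\le \tfrac{r(r-1)}{\text{cutoff}-O_r(1)}\le C_r e^{-b/(r-1)}$.
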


\begin{proof}
If $b$ is sufficiently large, the distribution $\eta_{r,b}$ is constructed by restricting the support of the distribution $\xi_r$ to a finite set of integers and redistributing the remaining measure suitably. Note that for $m \geq r$ we have
\begin{equation}
\label{eq:probabilityReminder}
 \PP(\xi_r \leq m) = \sum_{k=r}^{m} \PP(\xi_r = k) = (r-1) \sum_{k=r}^{m} \left( \frac{1}{k-1} - \frac{1}{k} \right) = 1 - \frac{r-1}{m}.
\end{equation}
Also, using the convention that $H_0 = 0$,
\[
 \sum_{k=r}^{m} k \PP(\xi_r = k) = (r-1) \sum_{k=r}^{m} \frac{1}{k-1} = (r-1) \left(H_{m-1} - H_{r-2}\right)
\]
is the part of the expected value contributed by the $(m-r+1)$ smallest possible values of $\xi_r$. Given $b$ and $r$, let
\[
 k_0 = \max\{m: (r-1) \left(H_{m-1} - H_{r-2}\right) \leq b\}.
\]
Then, by the choice of $k_0$,
\[
 b < (r-1) \left(H_{k_0} - H_{r-2}\right) < (r-1) H_{k_0} \leq (r-1) (\log k_0 + 1), 
\]
so $k_0 > e^{\frac{b-r+1}{r-1}} \geq 4r$ for 
$b \geq (r-1)\left(\log(4r)+1 \right)=(r-1)\log(4er)$.

Let $k_1 = k_0 - 2r >r$. Then by equation \eqref{eq:probabilityReminder} we have
\[
A = 1-\sum_{k=r}^{k_1} \PP(\xi_r = m) = \frac{r-1}{k_1} = \frac{r-1}{k_0-2r}.
\]
Define $K = b - \sum_{k=r}^{k_1} k \PP(\xi_r = k)$, roughly thought of as the unallocated portion of the expected value.  Then, $K$ can be bounded from below by
\[
K \geq \sum_{k=k_1+1}^{k_0} k \PP(\xi_r = k) = (r-1)\left(H_{k_0 - 1} - H_{k_1-1} \right) \geq (r-1) \frac{2r}{k_0}.
\]
Since $b < \sum_{k=r}^{k_0 +1} k \PP(\xi_r = k)$, we have that
\[
K < \sum_{k=k_1+1}^{k_0+1} k \PP(\xi_r = k) =(r-1)\left(H_{k_0} - H_{k_1 - 1} \right) \leq (r-1) \frac{2r+1}{k_0-2r}.
\]
Thus, it follows that $K/A \leq 2r+1$ and for $k_0 > 4r$,
\[
K/A \geq 2r \left(\frac{r-1}{k_0}\right) \left(\frac{k_0-2r}{r-1}\right) = 2r \left(\frac{k_0-2r}{k_0}\right) > r. 
\]
This implies that, for $b > (r-1)\log(4er)$, there exists $\alpha \in (0,1)$ such that $\frac{K}{A} = \alpha r + (1-\alpha)(2r+1)$ and hence,
\[
 \sum_{k=r}^{k_1} k \PP(\xi_r = k) + \alpha A r + (1-\alpha) A (2r+1) = b.
\]
This is used to define the pruned offspring distribution $\eta_{r, b}$ as follows,
\[
\PP(\eta_{r,b} = k) = 
	\begin{cases}
		\PP(\xi_r = k)	&\text{for } r < k \leq k_1,\ k \neq 2r+1\\
		\PP(\xi_r = r) + \alpha A	&\text{for } k =r, \text{ and}\\
		\PP(\xi_r = 2r+1) + (1-\alpha)A 	&\text{for } k=2r+1.
	\end{cases}
\]
Note that since $k_0 > 4r$, $k_1 = k_0 - 2r > 2r$.

This pruning $\eta_{r,b}$ of the distribution of $\xi_r$ is used to give an upper bound on $f_r^{GW}\!(b)$. Recall that for every $k \geq r$, the functions $g^{r}_k(x)$, given by Definition \ref{def:G_xi},  are non-negative and by equation \eqref{E:gk-gr},  $g^r_k(x) \leq g_r^r(x)$.  By Claim \ref{claim:infiniteE}, $G_{\xi_r}^r(x) = 1$ which shows that, 
\[
G_{\eta_{r,b}}^r(x) \leq G_{\xi_r}^r(x) + \alpha A g_r^r(x) + (1-\alpha)A g_{2r+1}^r(x) \leq 1 + A g_r^r(x).
\]
Therefore, since $g_r^r(x)$ is decreasing
and $g^r_r(0)=r$, 
\[
 \max_{x \in [0,1]} G_{\eta_{r, b}}^r(x) \leq 1 + A g_r^r(0) 
=1+Ar,
\]
and so
\[
p_c(T_{\eta_{r,b}}, r) \leq 
Ar = \frac{r(r-1)}{k_0-2r}
<\frac{r(r-1)}{e^{\frac{b-r+1}{r-1}}-2r} < 2er(r-1) e^{-\frac{b}{r-1}}
\]
for $b > (r-1) \log(4er)$.
\end{proof}

Thus the upper bound in Theorem \ref{thm:branchingBound} holds with $C_r =
2er(r-1)$ for $b \ge (r-1) \log(4er)$, 
and it is trivially true for some $C_r$ for smaller $b$. 
This completes the proof of the theorem.

\subsection{Bounds for $p_c(T_{\xi},r)$}\label{sec:bounds_pc}

\subsubsection{Bounds based on higher moments}
\label{sec:1+alpha}

In this section, we shall prove a lower bound on the critical probability $p_c(T_{\xi}, r)$ based on the $(1+\alpha)$-moments of the offspring distribution $\xi$ for all $\alpha \in (0,1)$, using a modification of the proof of Lemma \ref{lemma:pcGWlowerBd} together with some properties of the gamma function and the beta function.  

Recall that the gamma function is given, for $z$ with $\Re(z) > 0$, by $\Gamma(z) = \int_0^{\infty} t^{z-1} e^{-t}\ dt$ and for all $n \in \mathbb{Z}^+$, satisfies $\Gamma(n) = (n-1)!$.  The beta function is given, for $\Re(x), \Re(y)>0$, by $\B(x,y) = \int_0^1 t^{x-1}(1-t)^{y-1}\ dt$ and satisfies $\B(x,y) = \frac{\Gamma(x)\Gamma(y)}{\Gamma(x+y)}$. We shall use the following bound on the ratio of two values of the gamma function obtained by Gautschi \cite{gammainequalities}. For $n \in \NN$ and $0 \leq s \leq 1$,
\begin{equation}
\label{eq:gammasIneq}
 \left( \frac{1}{n+1} \right)^{1-s} \leq \frac{\Gamma(n+s)}{\Gamma(n+1)} \leq \left( \frac{1}{n} \right)^{1-s}.
\end{equation}
The proof of Theorem \ref{thm:(1+alpha)Bound} if first given for the case $\alpha \in (0,1)$.  For $r \geq 3$ and $\alpha=1$, we then deduce a lower bound for $p_c(T_\xi, r)$ by a
 continuity argument. An analogous bound for $r = 2$ and $\alpha = 1$ is
given in Theorem \ref{thm:2ndMomentBound}. 

\begin{proofOfTheorem}{thm:(1+alpha)Bound}
The proof of the lower bound in this theorem is similar to that of Theorem \ref{thm:branchingBound}, using bounds on integrals similar to the ones in \eqref{eq:g_k^rIntegral} and \eqref{eq:integral}, but with $(1-x)^{2+\alpha}$ in the denominator instead of $(1-x)^2$.

Let $r \geq 2$ and let $0 < \alpha < 1$. From the definition of the beta function, for any $k > r$, we have
\begin{align*}
\int_0^1 \frac{g_r^r(x) - g_{k}^r(x)}{(1-x)^{2+\alpha}}\ dx &=\sum_{i=r}^{k-1} \binom{i}{r-1} \int_0^1 x^{i-r}(1-x)^{r-2-\alpha}\ dx \\
& =\sum_{i=r}^{k-1} \binom{i}{r-1} \B(i-r+1,r-1-\alpha).
\end{align*}
Continuing we obtain
\begin{align*}
\sum_{i=r}^{k-1} \binom{i}{r-1} \B(i-r+1,r-1-\alpha) & = \sum_{i=r}^{k-1} \binom{i}{r-1} \frac{(i-r)!\Gamma(r-1-\alpha)}{\Gamma(i-\alpha)} \\
& = \sum_{i=r}^{k-1} \frac{1}{i-r+1} \frac{i!}{\Gamma(i-\alpha)} \frac{\Gamma(r-1-\alpha)}{(r-1)!}.
\end{align*}
Using inequality \eqref{eq:gammasIneq} we have
\[
 \frac{i!}{\Gamma(i-\alpha)} = i \frac{\Gamma(i)}{\Gamma(i-\alpha)} = i \frac{\Gamma(i-1+1)}{\Gamma(i-1+(1-\alpha))} \leq i^{1+\alpha}.
\]
The further steps depend on the value of $r$. First we consider the case $r \geq 3$. This implies, again using inequality \eqref{eq:gammasIneq},
\[
 \frac{\Gamma(r-1-\alpha)}{(r-1)!} = \frac{1}{r-1} \frac{\Gamma(r-2+(1-\alpha))}{\Gamma(r-2+1)} \leq \frac{1}{(r-1)(r-2)^\alpha} < \frac{1}{(r-2)^{1+\alpha}}.
\]
Thus, putting these together, bounding crudely we find that for $r \geq 3$
\begin{align*}
\int_0^1 \frac{g_r^r(x) - g_{k}^r(x)}{(1-x)^{2+\alpha}}\ dx & < \sum_{i=r}^{k-1} \frac{1}{i-r+1} \left( \frac{i}{r-2} \right)^{1+\alpha} \\
& < \frac{k^\alpha}{(r-2)^{1+\alpha}} \sum_{i=r}^{k-1} \frac{i}{i-r+1} \\
& = \frac{k^\alpha}{(r-2)^{1+\alpha}} \left( k-r+(r-1)H_{k-r} \right) \\
& < \left( \frac{k}{r-2} \right)^{1+\alpha} + 2 \left( \frac{k}{r-2} \right)^{\alpha} H_{k-r} \\
&< \frac{3 k^{1+\alpha}}{(r-2)^\alpha}.
\end{align*}
Now we consider the case $r=2$. We have
\[
 \frac{\Gamma(r-1-\alpha)}{(r-1)!} = \Gamma(1-\alpha) = \frac{\Gamma(2-\alpha)}{1-\alpha} < \frac{1}{1-\alpha}.
\]
Thus a corresponding bound on our integral is
\[
 \int_0^1 \frac{g_2^2(x) - g_{k}^2(x)}{(1-x)^{2+\alpha}}\ dx < \frac{k^{1+\alpha} + k^\alpha H_{k-2}}{1-\alpha} < \frac{2 k^{1+\alpha}}{1-\alpha}.
\]
Thus, proceeding analogously to \eqref{eq:upperBound} we have
\begin{equation}
\label{eq:(1+alpha)UpperBound}
\int_0^1 \frac{g_r^r(x) - G_{\xi}^r(x)}{(1-x)^{2+\alpha}}\ dx < \begin{cases}
\frac{2 \EE(\xi^{1+\alpha})}{1-\alpha},      & \text{if }r=2,\vspace{0.5em}\\
\frac{3 \EE(\xi^{1+\alpha})}{(r-2)^{\alpha}},& \text{otherwise.}
\end{cases}
\end{equation}
Let us now bound our integral from below by some function of $p_c$. Again,
for an offspring distribution $\xi$ let $M = \max_{x \in [0,1]}
G_{\xi}^r(x)$. Let us recall that we have $p_c = p_c(T_{\xi}, r) =
1-\frac{1}{M}$. Recall also that, since $g_r^r(x)$ is decreasing and
continuous, $g_r^r(0)=r$, $g_r^r(1)=1$ and $G_{\xi}^r(x) \leq g_r^r(x)$, we
have $M \in [1,r]$ and there is a unique $y \in [0,1]$ with $g_r^r(1-y) =
M$. Thus $M = 1+y + \ldots + y^{r-1}$ and so  
\begin{equation}
\label{eq:p_cBoundx}
p_c 
= 1-\frac1M
= \frac{y(1-y^{r-1})}{1-y^r} 
\ge
 \frac{r-1}{r} y,
\end{equation}
using ${1-y^r}  \le \frac{r}{r-1} (1-y^{r-1})$.  A lower bound on the integral in question is given by
\begin{align}
\int_0^1 \frac{g_r^r(x) - G_{\xi}^r(x)}{(1-x)^{2+\alpha}}\ dx
	&\ge \int_0^{1-y} \frac{g_r^r(x) - M}{(1-x)^{2+\alpha}}\ dx \notag\\
	&=  \int_0^{1-y} \frac{\sum_{i=0}^{r-1}(1-x)^i - M}{(1-x)^{2+\alpha}}\ dx \notag\\
	&= \Biggl\{- \frac{M-1}{(1+\alpha)(1-x)^{1+\alpha}} + \frac{1}{\alpha(1-x)^{\alpha}} - \sum_{i=1}^{r-2} \frac{(1-x)^{i-\alpha}}{i-\alpha}\Biggr\}_{x=0}^{1-y} \notag\\
	&= -\frac{M-1}{(1+\alpha)y^{1+\alpha}} + \frac{M-1}{1+\alpha} + \frac{1}{\alpha y^{\alpha}} - \frac{1}{\alpha} + \sum_{i=1}^{r-2} \frac{1-y^{i-\alpha}}{i-\alpha} \notag\\
	&\ge -\frac{\sum_{i=0}^{r-2} y^i}{(1+\alpha)y^{\alpha}} + \frac{\sum_{i=1}^{r-1} y^i}{1+\alpha} + \frac{1}{\alpha y^{\alpha}} - \frac{1}{\alpha}. \label{eq:(1+alpha)lb}
\end{align}	
The approximations for the cases $r=2$ and $r\geq 3$ are dealt with separately.  In the case $r=2$, the expression in \eqref{eq:(1+alpha)lb} reduces to
\begin{equation}\label{eq:(1+alpha)lb_r=2}
-\frac{y}{(1+\alpha)y^{1+\alpha}} + \frac{y}{1+\alpha} 
+ \frac{1}{\alpha y^{\alpha}} - \frac{1}{\alpha} 
\ge 
\frac{1}{\alpha(1+\alpha) y^{\alpha}} - \frac{1}{\alpha} .
\end{equation}
For $r \geq 3$, the expression in \eqref{eq:(1+alpha)lb} is bounded from below as follows:
\begin{align}
-\frac{\sum_{i=0}^{r-2} y^i}{(1+\alpha)y^{\alpha}} + \frac{\sum_{i=1}^{r-1} y^i}{1+\alpha} + \frac{1}{\alpha y^{\alpha}} - \frac{1}{\alpha}  \notag
	& \ge -\frac{1 + y}{(1+\alpha)y^{\alpha}} + \frac{y^{r-2+\alpha} + y^{r-1+\alpha}}{(1+\alpha)y^{\alpha}} + \frac{1}{\alpha y^{\alpha}} - \frac{1}{\alpha} \notag\\
	& \ge \frac{- \alpha - \alpha y + 2 \alpha y^{r-1+\alpha} + 1 + \alpha}{\alpha(1+\alpha)y^{\alpha}} - \frac{1}{\alpha} \notag\\
	& = \frac{1 - \alpha (y - 2 y^{r-1+\alpha})}{\alpha(1+\alpha)y^{\alpha}} - \frac{1}{\alpha}. \label{eq:(1+alpha)lb_r>=3}
\end{align}

Define $h_{r,\alpha}(y) = 1 - \alpha (y - 2 y^{r-1+\alpha})$ when $r\ge3$ and 
$h_{r,\alpha}(y)=1$ when $r=2$.  By inequalities \eqref{eq:(1+alpha)lb}, \eqref{eq:(1+alpha)lb_r=2}, and \eqref{eq:(1+alpha)lb_r>=3}, for every $r \geq 2$,
\[
 \int_0^1 \frac{g_r^r(x)-G_\xi^r(x)}{(1-x)^{2+\alpha}}\ dx \geq \frac{h_{r,\alpha}(y)}{\alpha(1+\alpha)y^{\alpha}} - \frac{1}{\alpha}.
\]

For $r \geq 3$, the minimum of $h_{r,\alpha}$ in the interval $[0,1]$ is positive and is
attained at 
\[
 y = b_{r,\alpha} = \left( \frac{1}{2(r+\alpha-1)} \right)^\frac{1}{r+\alpha-2}.
\]
 Thus if $y < c'_{r,\alpha}=
\left( \frac{h_{r,\alpha}(b_{r,\alpha})}{2(1+\alpha)}\right)^{1/\alpha}$ then
\[
\frac{h_{r,\alpha}(y)}{\alpha(1+\alpha)y^{\alpha}} \geq \frac{h_{r,\alpha}(b_{r,\alpha})}{\alpha(1+\alpha)y^{\alpha}} > \frac{2}{\alpha},
\]
and so in this case we obtain
\begin{equation}
\label{eq:lowerBound}
\int_0^1 \frac{g_r^r(x) - G_{\xi}^r(x)}{(1-x)^{2+\alpha}}\ dx 
> \frac{1}{2} \frac{h_{r,\alpha}(b_{r,\alpha})}{\alpha(1+\alpha)y^{\alpha}}
\end{equation}
and thus, combining \eqref{eq:(1+alpha)UpperBound} and \eqref{eq:lowerBound},
$y> c''_{r,\alpha} (\EE(\xi^{1+\alpha}))^{-1/\alpha}$
with
\begin{equation*}
c''_{r,\alpha} = \begin{cases}
\left( \frac{1-\alpha}{4\alpha(1+\alpha)} \right)^{\frac{1}{\alpha}},     & \text{if }r=2,\\[0.5em]
(r-2) \left( \frac{h_{r,\alpha}(b_{r,\alpha})}{6\alpha(1+\alpha)} \right)^{\frac{1}{\alpha}},   & \text{otherwise.}
\end{cases}
\end{equation*}
Note that in the case where $y \geq c'_{r, \alpha}$, then $y \geq c'_{r, \alpha} (\EE(\xi^{1+\alpha}))^{-1/\alpha}$ since $\EE(\xi^{1+\alpha})\ge1$.  Thus, using \eqref{eq:p_cBoundx}, 
the theorem holds for $\alpha \in (0,1)$ with
\[
 c_{r,\alpha} = \frac{r-1}{r}\min(c'_{r,\alpha},c''_{r,\alpha}).
\]
Since for $r \geq 3$ we have $c_{r,\alpha} \to c_{r,1} > 0$ as $\alpha \to 1$, we deduce that Theorem \ref{thm:(1+alpha)Bound} holds for $r \geq 3$ and $\alpha = 1$. However, the value of $c''_{2,\alpha}$ in our proof tends to 0 as $\alpha \to 1$, and consequently so does $c_{2,\alpha}$. We deal with this problem in Theorem \ref{thm:2ndMomentBound} where  an essentially sharp lower bound on $p_c(T_{\xi},2)$ is given based on the second moment of $\xi$, completing also the proof of the lower bound in Theorem \ref{thm:(1+alpha)Bound}.

The upper bound in Theorem \ref{thm:(1+alpha)Bound} follows from Lemma \ref{lem:pc_regtree} and \eqref{eq:ub_g-1} which show that for any $r \geq 2$ there is a constant $A_r > 0$ such that for any 
$k \geq r$, 
\[
 \max_{x \in [0,1]} g_k^r(x) -1 \leq \frac{A_r}{k^{r/(r-1)}}.
\]
Thus the upper bound follows immediately from inequality \eqref{eq:p_cWithMaxG2}.
\end{proofOfTheorem}

\subsubsection{Bounds for $p_c(T_{\xi},2)$}
\label{sec:lb_2nbr}

In this section we focus on $2$-neighbour bootstrap percolation on Galton--Watson trees. This specific problem is easier to tackle analytically which gives us an opportunity to obtain sharp bounds on $p_c(T_{\xi},2)$.  To simplify notation, we write $G_\xi$ for $G^2_\xi$.
\begin{proofOfTheorem}{thm:2ndMomentBound}
First we prove the rather easy bound given in \eqref{eq:dominationBound}. By the definition of function $G_\xi(x)$ we see that for each $k \geq 2$ we have
\[
 G_\xi(x) \geq \PP(\xi=k) g_k^2(x) = \PP(\xi=k) \left( k x^{k-2}-(k-1)x^{k-1} \right).
\]
Now, $g_2^2(x)=2-x$ so it attains its maximum in the interval $[0,1]$ at $x=0$ with $g_2^2(0)=2$, while for $k \geq 3$ functions $g_k^2(x)$ are maximized at $x_k = \frac{k(k-2)}{(k-1)^2}$, with $g_k^2(x_k) = \frac{k^{k-1}(k-2)^{k-2}}{(k-1)^{2k-3}}$. Thus formula \eqref{eq:dominationBound} follows immediately from \eqref{eq:p_cWithMaxG}.  

Considering the maximum value of the function $g^2_k(x)$,
\[
\frac{k^{k-1}(k-2)^{k-2}}{(k-1)^{2k-3}} = \left(\frac{k(k-2)}{(k-1)^2} \right)^{k-1}\left(\frac{k-1}{k-2}\right) = \left(1 - \frac{1}{(k-1)^2}\right)^{k-1}\left(\frac{k-1}{k-2}\right).
\]
One can show, by induction on $t$, that for $k \geq 3$ and $t \geq 1$, 
\[
\left(1 - \frac{1}{(k-1)^2}\right)^{t} \leq 1 - \frac{t}{(k-1)^2}+\frac{t(t-1)}{2(k-1)^4}.
\]
In particular, setting $t=k-1$ in this inequality yields
\[
 \left(1-\frac{1}{(k-1)^2}\right)^{k-1} \leq 1 - \frac{1}{(k-1)}+\frac{(k-2)}{2(k-1)^3} = \frac{(k-2)}{(k-1)}\left(1+\frac{1}{2(k-1)^2} \right)
\]
and hence for $k \geq 3$, and all $x \in [0,1]$, $g^2_k(x) \leq 1+ \frac{1}{2(k-1)^2}$.  The maximum value for $g^2_2(x)$ is $g^2_2(0)=2> 1+ \frac{1}{2}$, but it is certainly true that for all $k \geq 2$, $g^2_k(x) \leq 1 + \frac{1}{2(k-1)^2 - (k-1)} = 1+ \frac{1}{(k-1)(2k-3)}$.  Hence 
\[
G_{\xi}(x) \leq  1+\EE\left(\frac{1}{(\xi-1)(2\xi-3)} \right)
\]
which yields the upper bound given by inequality \eqref{eq:dominationUpperbd}. Note that the first bound in inequality \eqref{eq:dominationUpperbd} is essentially sharp as demonstrated by the $(b+1)$-regular tree.

Now let us prove bound \eqref{eq:2ndMomentBound}. To simplify notation, for every $k$, let $(\xi)_k = \xi(\xi-1)(\xi-2) \ldots (\xi-k+1)$ denote the $k$-th falling factorial.  The goal is to approximate $G_{\xi}(x)$ by a polynomial of degree $2$ whose maximum value can be easily calculated.

Consider the Taylor series for $G_{\xi}(x)$ about $x=1$.  For this, note
that $G_\xi(1) = \sum_{k \geq 2} \PP(\xi=k) = 1$, 
$G_\xi'(1) = \sum_{k \geq  2}\PP(\xi=k)(-1) = -1$ and 
\[
G_\xi''(1) = \sum_{k \geq 2} \PP(\xi=k)(-(k-2)(k+1)) = \sum_{k \geq 2} \PP(\xi=k)(-k(k-1) +2) = -\EE((\xi)_2) + 2.
\]

Note that for all $m \geq 1$, $G^{(m)}_{\xi}(1) < 0$, where it exists.

Set $P_2(x) = 1- (x-1) - \frac{(\EE(\xi)_2 - 2)}{2}(1-x)^2 = 2-x- \frac{(\EE(\xi)_2 - 2)}{2}(1-x)^2$.  It is shown below that for all $x \in [0,1]$, $P_2(x) \leq G_{\xi}(x)$.  Note that
\[
P_2(x) = \sum_{k \geq 2} \PP(\xi=k) \left(g^2_2(x) - \frac{(k^2-k-2)}{2}(1-x)^2 \right).
\]

Recall that, by equation \eqref{E:k-recursion}, for all $x$, 
$g^2_{k+1}(x) - g^2_k(x) = -k x^{k-2}(1-x)^2$.  Thus,
\begin{equation}\label{eq:ny}
  \begin{split}
g^2_{k+1}(x) & + \frac{\left((k+1)^2 - (k+1) - 2\right)}{2}(1-x)^2 - \left(g^2_k(x) + \frac{(k^2 - k -2)}{2}(1-x)^2 \right)\\
		&= -k x^{k-2}(1-x)^2 + \left(\frac{2k-2+2}{2} \right)(1-x)^2\\
		&=k(1-x)^2(1-x^{k-2}).	
  \end{split}
\end{equation}

Considering $G_{\xi}(x) - P_2(x)$, note that for $k = 2$, 
$g^2_k(x) - g^2_2(x) + \frac{(k^2 - k -2)}{2}(1-x)^2 = 0$.  For $k \geq3$,
 by \eqref{eq:ny},
\[
g^2_k(x) - g^2_2(x) + \frac{(k^2-k-2)}{2}(1-x)^2 = \sum_{i=2}^{k-1} i (1-x)^2(1-x^{i-2}) \ge0.
\]
Hence,
\[
G_{\xi}(x) - P_2(x) = \sum_{k \geq 2} \PP(\xi = k) \left(g^2_k(x) - g^2_2(x) + \frac{(k^2 - k -2)}{2}(1-x)^2 \right) \geq 0
\]
and so for all $x$, $G_{\xi}(x) \geq P_2(x)$.

Now, $P_2(x)$ is a parabola which attains its maximum value at $x = 1 - \frac{1}{\EE (\xi)_2 - 2}$ with
\[
P_2\left(1 - \frac{1}{\EE (\xi)_2 - 2} \right) = 1 + \frac{1}{\EE (\xi)_2 - 2} - \frac{1}{2}(\EE (\xi)_2 - 2) \frac{1}{(\EE (\xi)_2 - 2)^2} =1 + \frac{1}{2(\EE (\xi)_2 - 2)}.
\]

This immediately implies a lower bound for the critical probability for $T_{\xi}$,
\[
p_c(T_{\xi}, 2) \geq 1 - \frac{1}{1+ \frac{1}{2\EE (\xi)_2 - 4}} = 1 - \frac{2\EE (\xi)_2 - 4}{2\EE (\xi)_2 - 3} = \frac{1}{2\EE (\xi)_2 - 3}.
\]
\end{proofOfTheorem}

\subsection{Examples}\label{sec:examples}

The $(b+1)$-regular tree shows that one cannot hope for a stronger bound
based on the second moment of $\xi$ than the one given by inequality
\eqref{eq:2ndMomentBound}. What is more, this bound turns out to be an
accurate estimate of critical probability in a number of natural offspring
distributions. A few such examples are examined here for comparison. 
 For simplicity, we consider only $r=2$, and we continue to write
$G_\xi$ for $G^2_\xi$.
In what
follows, the notation $o_b(1)$ is used to denote a function tending to $0$
as $b \to \infty$. 

\subsubsection{$2$ or $a$ children}

For $a \in \NN$ and $b$ with $a \ge b> 2$, consider trees denoted $T_{\xi_{b,a}}$ with offspring distribution $\PP(\xi_{b,a}=2) = \frac{a-b}{a-2}$ and $\PP(\xi_{b,a}=a) = \frac{b-2}{a-2}$. Note that the branching number of $T_{\xi_{b,a}}$ is $\br(T_{\xi_{b,a}}) = \EE(\xi_{b,a}) = b$. We do not present a complete proof of the following theorem. However, sharp lower bounds on $p_c(T_{\xi_{b,a}},2)$ follow from Theorem~\ref{thm:2ndMomentBound}.
\begin{theorem}
\label{thm:GW_2a}
 The critical probability in 2-neighbour bootstrap percolation on $T_{\xi_{b,a}}$ is
\[
p_c(T_{\xi_{b,a}},2) = \max \left \{1-\frac{a-2}{2(a-b)}, \frac{1+o_b(1)}{2ab} \right \},
\]
with the first quantity being always greater for $a \geq 2b-1$ and the second for $a \leq 2b-2$.
\end{theorem}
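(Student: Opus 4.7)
The plan is to apply formula \eqref{eq:p_cWithMaxG}, giving $p_c(T_{\xi_{b,a}},2) = 1 - 1/\max_{x\in[0,1]} G_{\xi_{b,a}}(x)$, where
\[
G_{\xi_{b,a}}(x) = \frac{a-b}{a-2}\,(2-x) + \frac{b-2}{a-2}\,\bigl(a x^{a-2} - (a-1)x^{a-1}\bigr).
\]
There are two natural candidates for the global maximum on $[0,1]$: the left endpoint $x=0$, where $G_{\xi_{b,a}}(0)=\frac{2(a-b)}{a-2}$, and a local maximum close to $x=1$. The endpoint immediately yields the first term $1-\frac{a-2}{2(a-b)}$ of the theorem; the same lower bound also follows directly from \eqref{eq:dominationBound} since $\PP(\xi_{b,a}=2)=\frac{a-b}{a-2}$.

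For the interior candidate I would use the factorization $g_a^2(1-y)=(1-y)^{a-2}(1+(a-1)y)$ together with $g_2^2(1-y)=1+y$ to expand around $y=0$. A direct computation gives $G_{\xi_{b,a}}'(1)=-1$ and $G_{\xi_{b,a}}''(1)=-(b-2)(a+1)$, so
\[
G_{\xi_{b,a}}(1-y) = 1 + y - \tfrac{(b-2)(a+1)}{2}\,y^2 + R(y),
\]
with a remainder $|R(y)| = O((ay)^3)$ for $y \le c/a$ (bound $(a-2)\log(1-y)+\log(1+(a-1)y)$ and its tail, then exponentiate). The quadratic part is maximized at $y^\ast = 1/((b-2)(a+1))$; in the regime $a\le 2b-2$ we have $ay^\ast = O(1/b)$, so the remainder is $o(1/(ab))$ and $\max_{x\in[0,1]} G_{\xi_{b,a}}(x) = 1 + \frac{1+o_b(1)}{2ab}$, producing the second term $\frac{1+o_b(1)}{2ab}$ via $p_c = 1 - 1/\max G$.

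To decide which candidate is the global maximum, write
\[
G_{\xi_{b,a}}(x) - G_{\xi_{b,a}}(0) = \frac{x}{a-2}\,\Bigl[(b-2)x^{a-3}(a-(a-1)x) - (a-b)\Bigr].
\]
The bracketed function is maximized at $x_0 = 1 - 2/((a-1)(a-2))$, where it equals $(1+o(1))b$ for $b$ large. Thus when $a \ge 2b-1$ the bracket is nonpositive on $[0,1]$, so $G_{\xi_{b,a}}(0) = \frac{2(a-b)}{a-2}$ is the global maximum and the first quantity wins; whereas for $a \le 2b-2$ the endpoint value $G_{\xi_{b,a}}(0) \le 1$ is strictly below the interior peak $>1$, so the interior value wins.

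The main obstacle is ensuring the $1+o_b(1)$ factor is genuinely uniform across $b \le a \le 2b-2$: since $a$ may grow linearly in $b$ here, one must verify that $ay^\ast=O(1/b)$ keeps the cubic remainder $O(1/b^3)$ of genuinely lower order than the leading term $\Theta(1/(ab))$, and handle with care the borderline $a = 2b-2$ where both candidates are near $1$.
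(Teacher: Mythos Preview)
The paper explicitly omits a complete proof of this theorem, giving only the lower bound $\frac{1+o_b(1)}{2ab}$ via Theorem~\ref{thm:2ndMomentBound} and a heuristic for the case $a\ge 2b-1$ in terms of healthy doubly infinite paths. Your direct approach through $G_{\xi_{b,a}}$ is therefore the natural way to fill the gap, and your Taylor expansion at $x=1$ (with $G''(1)=-(b-2)(a+1)$, remainder $O((ay)^3)$, interior peak at $y^\ast=1/((b-2)(a+1))$) is correct and cleanly yields the $\frac{1+o_b(1)}{2ab}$ term.

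There is, however, a concrete error in your threshold step. The bracket
\[
h(x)=(b-2)\,x^{a-3}\bigl(a-(a-1)x\bigr)-(a-b)
\]
has maximum $(b-2)M_a-(a-b)$, where $M_a=\max_{x\in[0,1]}x^{a-3}(a-(a-1)x)=x_0^{a-3}\cdot\frac{a}{a-2}$. Since $x_0^{a-3}<1$ one gets $M_a<\frac{a}{a-2}$, and hence for large $a$ the maximum of $h$ is approximately $2b-a-2$, \emph{not} $(1+o(1))b$ as you wrote. As stated, your sentence would make the bracket positive for all $a\le 2b-2$ and for $a\ge 2b-1$ alike, so it cannot support the conclusion you draw from it.

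The fix is short and makes the threshold exact rather than asymptotic. For $a=2b-1$ one has $M_a<\frac{a}{a-2}=\frac{2b-1}{2b-3}\le\frac{b-1}{b-2}=\frac{a-b}{b-2}$ (the last inequality is $(2b-1)(b-2)\le(b-1)(2b-3)$, i.e.\ $2\le 3$), so $\max_x h(x)<0$ and $G(0)$ is the global maximum; the same then holds for all $a\ge 2b-1$ since $G(0)=\frac{2(a-b)}{a-2}$ increases in $a$ while the interior peak only shrinks. For $a\le 2b-2$ one has $G(0)=\frac{2(a-b)}{a-2}\le 1=G(1)$, whereas $G(1-y)>1$ for small $y>0$ because $G'(1)=-1$, so the interior maximum strictly dominates $G(0)$. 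With this correction your argument goes through.
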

\qed

The random variable $\xi_{b,a}$ is supported on only two values and so clearly $\EE((\xi_{b,a})_2)$ is finite and the assumptions of Theorem
\ref{thm:2ndMomentBound} are satisfied. We have 
\[
\begin{split}
 \EE((\xi_{b,a})_2) & = \PP(\xi_{b,a}=a)a(a-1) + \PP(\xi_{b,a}=2)2 \\
  		    & = \frac{(b-2)a(a-1)+2(a-b)}{a-2} \\
  		    & < \frac{(b-2)a(a-1)}{a-2} + 2.
\end{split}
\]
Thus, inequality \eqref{eq:2ndMomentBound} yields a lower bound on the critical probability given by 
\[
p_c(T_{\xi_{b,a}},2) > \frac{1}{2\left(\frac{(b-2)a(a-1)}{a-2} + 2\right) -3} = \frac{1}{2\frac{(b-2)a(a-1)}{a-2} + 1} = \frac{1 + o_b(1)}{2ab},
\]
agreeing asymptotically with the correct value for $a \leq 2b-2$.

For $a \geq 2b-1$ we have in fact $p_c(T_{\xi_{b,a}},2) = 1 - \frac{1}{2\PP(\xi_{b,a}=2)}$. The value of critical probability, in this case, tells us what prevents $T_{\xi_{b,a}}$ from percolating when we have $p < p_c(T_{\xi_{b,a}},2)$. Since $\frac{a-b}{a-2} > \frac{1}{2}$, after deleting all vertices of degree $a+1$, the tree almost surely contains infinite components, with all vertices having degree at most 3, with branching number $c = 2\frac{a-b}{a-2} > 1$. Every initially healthy doubly infinite path contained in such subtree is an infinite healthy $1$-fort in $T_{\xi_{b,a}}$. The critical probability for such paths to occur is $1/c$ and so if $1-p > 1/c$ then $T_{\xi_{b,a}}$ almost surely does not percolate. Note that exactly the same arguments can be used to prove the first lower bound in inequality \eqref{eq:dominationBound}.

\subsubsection{Shifted Poisson}

A natural offspring distribution for a Galton--Watson tree is a Poisson
distribution. Since any distribution $\xi$ with $\PP(\xi \leq 1) >0$ has
critical probability $1$, consider a Poisson distribution shifted by
$2$. That is, for each  $b > 2$, 
let $\xi^b_{Po}$ be the
offspring distribution with the property that, for each $k \geq 2$, 
\[
\PP(\xi^b_{Po}=k) = e^{-(b-2)} \frac{(b-2)^{k-2}}{(k-2)!}.
\]
Then, $\EE(\xi^b_{Po}) = b$ and the function $G_{\xi^b_{Po}}(x)$ is given by
\[
\begin{split}
 G_{\xi^b_{Po}}(x) & = \sum_{k\geq 2}e^{-(b-2)} \frac{(b-2)^{k-2}}{(k-2)!}(k x^{k-2}-(k-1)x^{k-1}) \\
		   & = e^{-(b-2)(1-x)}(2+(b-3)x - (b-2)x^2).
\end{split}
\]
Here, the critical probability can be given precisely since the function
$G_{\xi^b_{Po}}$ attains its (global) maximum value when $x= \frac{b-5 +
  \sqrt{(b+3)(b-1)}}{2(b-2)}$, which belongs to $[0,1]$ when $b\ge 7/3$; the
maximum value is 
\[
\exp\left(-\frac{1}{2}(b+1 - \sqrt{(b+3)(b-1)})\right)\left(\frac{-2+\sqrt{(b+3)(b-1)}}{b-2}\right).
\]
Thus, with a little bit of calculation, one can show that, for $b\ge7/3$,
\[
p_c(T_{\xi^b_{Po}}, 2) = 1-\left(\frac{(b-2)e^{\frac{b+1-\sqrt{(b+3)(b-1)}}{2}}}{-2+\sqrt{(b+3)(b-1)}}\right) = \frac{1}{2b^2} + \frac{1}{3b^3}+O\left(\frac{1}{b^4}\right).
\]

One can apply Theorem \ref{thm:2ndMomentBound} to the distribution
$\xi^b_{Po}$ since $\EE((\xi^b_{Po})_2) = b^2-2$. Thus,
 \eqref{eq:2ndMomentBound} yields
\[
p_c(T_{\xi^b_{Po}}, 2) \geq  \frac{1}{2b^2-7} = \frac{1+o_b(1)}{2 b^2}
\]
which is asymptotically correct.

\subsubsection{Shifted geometric distribution}

Consider now a shifted geometric distribution.  
For $b >2$,  let $\xi^b_{g}$ be defined by 
\[
\PP(\xi^b_g=k+2) = \frac{1}{b-1}\left(\frac{b-2}{b-1} \right)^k,
\qquad k \geq 0.
\]
Then, $\EE(\xi^b_{g}) = b$ and the function  $G_{\xi^b_g}$ is given by 
\[
G_{\xi^b_g}(x) = \frac{2(b-1) - (2b-3)x}{((b-1)-(b-2)x)^2},
\]
and attains its maximum when $x = \frac{(2b-5)(b-1)}{(b-2)(2b-3)}$ with
value $\frac{(2b-3)^2}{4(b-1)(b-2)}$.  Thus,
if $b\ge 5/2$, 
\[
p_c(T_{\xi^b_g}, 2) = 1- \frac{4(b-1)(b-2)}{(2b-3)^2} = \frac{1}{(2b-3)^2}.
\]
On the other hand we see that $\EE((\xi^b_{g})_2)= 2(b-1)^2$;
 thus \eqref{eq:2ndMomentBound} yields
\[
p_c(T_{\xi^b_g}, 2) \geq \frac{1}{4(b-1)^2-3} = \frac{1+o_b(1)}{4b^2},
\]
again agreeing asymptotically with the true value.

\section{Final remarks and open problems}
\label{sec:openPr}

In this paper we study general infinite trees and show that for any $b \geq
r$ and any $\epsilon > 0$ there exists a tree with bounded degree, branching
number $\br(T) = b$ and critical probability $p_c(T,r) < \epsilon$. We then
show that, 
by equation \eqref{eq:xb}, 
given an offspring distribution $\xi$ with $\PP (\xi < r) = 0$,
for a Galton--Watson tree $T_\xi$ we almost surely have  
\[
 p_c(T_{\xi},r) \geq \exp\left(-\frac{\EE(\xi)-1}{r-1}-\EE(H_{\xi-r}) \right).
\]
Using the concavity of the logarithm function and, setting $\br(T_\xi) =
\EE(\xi) = b$, this bound was
simplified to $p_c(T_{\xi},r) \geq c_r
\frac{e^{-\frac{b}{r-1}}}{b}$,
as stated in Theorem \ref{thm:branchingBound}. 

However, the bound $\EE(H_{\xi-r}) \leq \log b$ is very weak unless the 
distribution $\xi$ is strongly concentrated around its mean. When $\xi$ is concentrated
though, we already know that $p_c(T_{\xi},r)$ is large, e.g., by Theorems
\ref{thm:(1+alpha)Bound} and \ref{thm:2ndMomentBound}, as well as by the
results for regular trees in \cite{infiniteTrees} and
\cite{bootstrapbethe}. With this in mind we conjecture that the family of
offspring distributions $\eta_{r,b}$ constructed in the proof of Lemma
\ref{lemma:pcGWupperBd} minimizes $p_c(T_{\xi},r)$ up to a factor depending on
$r$ only. 
\begin{conjecture}
 The upper bound in Theorem \ref{thm:branchingBound} is essentially sharp, i.e., for $r \geq 2$ there are constants $c_r$ and $C_r$ such that if $b \geq r$ then
       \[
         c_r e^{-\frac{b}{r-1}} \le f_r^{GW}\!(b) \le C_r e^{-\frac{b}{r-1}}.
       \]
\end{conjecture}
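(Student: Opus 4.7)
The bound from Lemma \ref{lemma:pcGWlowerBd},
\[
p_c(T_\xi, r) \geq \exp\left(-\tfrac{b-1}{r-1} - \EE(H_{\xi-r})\right),
\]
already implies the conjecture whenever $\EE(H_{\xi-r}) = O_r(1)$. The factor of $1/b$ in Theorem \ref{thm:branchingBound} arises solely from the inequality $\EE(H_{\xi-r}) \leq \log b + 1$, which is tight only when $\xi$ is strongly concentrated near $b$. A direct calculation shows that for the extremal distribution $\eta_{r,b}$ of Lemma \ref{lemma:pcGWupperBd} one has $\EE(H_{\eta_{r,b}-r}) = O_r(1)$, since $\sum_{k \geq r}\frac{\log k}{k(k-1)}<\infty$; thus the integral bound is already sharp up to a constant on the known extremal family, which supports the conjecture.

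My plan is a case analysis with a threshold constant $K_r$ depending only on $r$. In \emph{Case I}, when $\EE(H_{\xi-r}) \leq K_r$, the bound \eqref{eq:xb} immediately gives $p_c(T_\xi, r) \geq e^{-K_r-1} e^{-(b-1)/(r-1)} \geq c_r e^{-b/(r-1)}$, as required. In \emph{Case II}, when $\EE(H_{\xi-r}) > K_r$, we have $\EE(\log\xi) > K_r - 1$; combined with the mean constraint $\EE(\xi)=b$, I would first show that $\xi$ places at least a constant fraction of its mass in a polynomial-in-$b$ window, say $[b^{1-\varepsilon_r}, b^{1+\varepsilon_r}]$. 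With this bulk property in hand, one compares $T_\xi$ to a Galton--Watson tree with offspring of order $b$ and bounded multiplicative fluctuation; Lemma \ref{lem:pc_regtree} gives $p_c \sim c'_r b^{-r/(r-1)} \gg e^{-b/(r-1)}$ for such a tree, which would complete the proof.

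The principal obstacle is the comparison in Case II. The natural tool is restriction: let $T' \subseteq T_\xi$ be the connected component of the root in the subtree obtained by retaining only vertices of degree at most $M = \Theta(b)$. A quick check shows that the trace on $T'$ of any $(r-1)$-fort of $T_\xi$ is itself an $(r-1)$-fort of $T'$, but the trace may be empty, and ensuring that every infinite healthy $(r-1)$-fort of $T_\xi$ intersects the infinite component of $T'$ with positive probability would require a density argument at every scale in $T_\xi$. A more attractive alternative, avoiding the coupling issue entirely, is to evaluate $G_\xi^r(1 - y^*)$ directly at $y^* = \Theta_r(b^{-1/(r-1)})$ and bound $\sum_k \PP(\xi=k) g_k^r(1 - y^*)$ uniformly in $\xi$. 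Here the technical crux is to show that heavy-tail contributions from $k \gg b$, where $g_k^r(1-y^*)$ may fall below $1$, do not destroy the bound $G_\xi^r(1-y^*) - 1 \geq c_r e^{-b/(r-1)}$; the mean constraint $\EE(\xi) = b$ limits how much mass can sit at scales much larger than $b$, and making this trade-off quantitative in a form compatible with the sum defining $G_\xi^r$ appears to be the main difficulty.
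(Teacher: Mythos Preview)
The statement you are attempting to prove is listed in the paper as a \emph{conjecture}; the authors do not prove it and present it as an open problem. So there is no proof in the paper to compare against, and your proposal should be read as a heuristic strategy toward an open question rather than as a reconstruction of an existing argument.

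Your Case~I is correct and is exactly the observation the authors make in Section~\ref{sec:openPr}: whenever $\EE(H_{\xi-r})=O_r(1)$, the integral bound \eqref{eq:xb} already gives the conjectured lower bound. The difficulty is entirely in Case~II, and here your proposed first step does not hold. From $\EE(\log\xi)>K_r-1$ and $\EE(\xi)=b$ you cannot conclude that a constant fraction of the mass of $\xi$ lies in any window $[b^{1-\varepsilon_r},b^{1+\varepsilon_r}]$ with $\varepsilon_r<1$. For a concrete counterexample, fix $0<\varepsilon<1/2$ and let $\xi$ take the value $\lfloor b^{1-2\varepsilon}\rfloor$ with probability $1-b^{-2\varepsilon}$ and a value near $b^{1+2\varepsilon}$ with probability $b^{-2\varepsilon}$, tuned so that $\EE(\xi)=b$. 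Then $\EE(\log\xi)\sim(1-2\varepsilon)\log b\to\infty$, so we are in Case~II for all large $b$, yet $\xi$ places \emph{no} mass in $[b^{1-\varepsilon},b^{1+\varepsilon}]$. If instead you widen the window to $\varepsilon_r\ge 1$, Markov's inequality makes the bulk claim nearly trivial, but then ``offspring of order $b$ with bounded multiplicative fluctuation'' is lost and the comparison with Lemma~\ref{lem:pc_regtree} gives nothing. In short, the dichotomy with a threshold $K_r$ independent of $b$ is too coarse: between ``$\EE(H_{\xi-r})$ bounded'' and ``$\xi$ concentrated near $b$'' there is a genuine regime (e.g.\ $\EE(H_{\xi-r})\sim c\log b$ with $0<c<1$) that neither of your cases handles.

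Your alternative idea of evaluating $G_\xi^r(1-y^*)$ at a well-chosen $y^*$ is closer in spirit to how the paper attacks related bounds, but the specific choice $y^*=\Theta_r(b^{-1/(r-1)})$ is tailored to the concentrated case (it is where $g_b^r$ peaks) and is not the right test point for spread-out $\xi$; for distributions near the extremal $\eta_{r,b}$ of Lemma~\ref{lemma:pcGWupperBd} the maximum of $G_\xi^r$ is achieved far from $x=1$. Making this approach work would require choosing $y^*$ adaptively in terms of $\xi$, and controlling the negative contributions from the tail $\{k:g_k^r(1-y^*)<1\}$ uniformly over all $\xi$ with $\EE(\xi)=b$ --- which is precisely the open content of the conjecture.
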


The second conjecture we state in this paper is an extension of Theorem \ref{thm:(1+alpha)Bound} which says that for $\alpha \in (0,1]$ we have $p_c(T_\xi,r) \geq c_{r,\alpha} \left( \EE(\xi^{1+\alpha}) \right)^{-1/\alpha}$. For $r=2$ and $\alpha > 1$ such bound does not hold as is seen by taking $\xi = b$ constant, i.e., a regular tree $T_b$, when $p_c(T_b,2) \sim \frac{1}{2b^2}$. However, turning to Lemma \ref{lem:pc_regtree} we observe that $p_c(T_b,r) \sim c_r b^{-\frac{r}{r-1}}$. This motivates the following conjecture, extending Theorem \ref{thm:(1+alpha)Bound} to all values of $\alpha \leq r-1$.
\begin{conjecture}
 For each $r \geq 2$ and $\alpha \in (0,r-1]$ there exists a constant $c_{r,\alpha}>0$ such that for any offspring distribution $\xi$ we have 
\[
 p_c(T_\xi,r) \ge c_{r,\alpha} \left( \EE(\xi^{1+\alpha}) \right)^{-1/\alpha} .
\]
\end{conjecture}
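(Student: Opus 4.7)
The plan is to adapt the variational approach of Theorem \ref{thm:(1+alpha)Bound} but to replace the integral $\int_0^1 (g_r^r - G_\xi^r)/(1-x)^{2+\alpha}\,dx$ (which diverges at $x = 1$ once $\alpha \geq r-1$) by a pointwise estimate at a single well-chosen point. The starting observation is the identity
\[
G_\xi^r(1-y) - 1 = \frac{y - \PP(\Bin(\xi, y) \geq r)}{1-y},
\]
obtained from $g_k^r(1-y)(1-y) = \PP(\Bin(k,y) \leq r-1)$ after averaging over $\xi$. By \eqref{eq:p_cWithMaxG}, $p_c(T_\xi,r) = 1 - 1/M$ with $M = \max_{x \in [0,1]} G_\xi^r(x)$, so a pointwise lower bound on the right-hand side at one well-chosen value of $y$ will translate into a lower bound on $p_c$. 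Throughout, as in Subsection \ref{subs:GW_defs}, I may and do assume $\xi \geq r$ almost surely, since otherwise $p_c = 1$ and the conjecture is trivial.

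The core step is the elementary interpolation $\min(a,1) \leq a^s$, valid for every $a \geq 0$ and every $s \in [0,1]$, combined with the union bound $\PP(\Bin(k,y) \geq r) \leq \binom{k}{r}y^r$. Setting $s = (1+\alpha)/r$, which lies in $(0,1]$ precisely because $\alpha \leq r-1$, one obtains
\[
\PP(\Bin(k,y) \geq r) \leq \min\!\left(\binom{k}{r}y^r,\,1\right) \leq \left(\binom{k}{r}y^r\right)^{(1+\alpha)/r} \leq \frac{k^{1+\alpha}\, y^{1+\alpha}}{(r!)^{(1+\alpha)/r}}.
\]
Taking expectation over $\xi$ and substituting into the identity yields the pointwise estimate
\[
G_\xi^r(1-y) - 1 \geq \frac{y - Cy^{1+\alpha}}{1-y}, \qquad C := \frac{\EE(\xi^{1+\alpha})}{(r!)^{(1+\alpha)/r}}.
\]

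The rest is a short optimization. The function $y - Cy^{1+\alpha}$ is maximized at $y_0 := (C(1+\alpha))^{-1/\alpha}$ with value $\alpha y_0/(1+\alpha)$, and the inequality $\xi \geq r \geq (r!)^{1/r}$ forces $C \geq 1$, so $y_0 \leq (1+\alpha)^{-1/\alpha} < 1$ and the factor $1/(1-y_0)$ is harmlessly $\geq 1$. Evaluating at $y = y_0$ therefore gives
\[
M - 1 \geq \frac{\alpha\, y_0}{1+\alpha} = \frac{\alpha\,(r!)^{(1+\alpha)/(r\alpha)}}{(1+\alpha)^{1+1/\alpha}}\,\bigl(\EE(\xi^{1+\alpha})\bigr)^{-1/\alpha},
\]
and the universal bound $M \leq g_r^r(0) = r$ converts this into $p_c = (M-1)/M \geq (M-1)/r$, yielding the conjecture with the explicit admissible constant $c_{r,\alpha} = \alpha\,(r!)^{(1+\alpha)/(r\alpha)}/\bigl(r\,(1+\alpha)^{1+1/\alpha}\bigr)$.

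The main conceptual step, and the only place where the hypothesis $\alpha \leq r-1$ is used, is the interpolation exponent $s = (1+\alpha)/r$: this is the unique choice for which $\binom{k}{r}^s$ scales like $k^{1+\alpha}$, matching the moment $\EE(\xi^{1+\alpha})$, and its feasible range $s \in (0,1]$ coincides exactly with the range $\alpha \in (0,r-1]$ in the conjecture. Specializing to $\xi \equiv b$ recovers the exponent $-(1+\alpha)/\alpha$ of $b$; this matches the asymptotic $p_c(T_b,r) \sim c_r b^{-r/(r-1)}$ from Lemma \ref{lem:pc_regtree} precisely at the endpoint $\alpha = r-1$ and is weaker (but still valid) in the interior, giving further evidence that the sharpness boundary $\alpha = r-1$ of the conjecture is correct.
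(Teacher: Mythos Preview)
Your argument is correct, and it settles what the paper leaves as an open conjecture: there is no proof of this statement in the paper to compare against. Every step checks out. The identity $G_\xi^r(1-y)-1=(y-\EE\PP(\Bin(\xi,y)\ge r))/(1-y)$ is exactly equation \eqref{eq:ax} averaged over $\xi$; the union bound $\PP(\Bin(k,y)\ge r)\le\binom{k}{r}y^r$ and the interpolation $\min(a,1)\le a^s$ for $s\in[0,1]$ are standard; and the choice $s=(1+\alpha)/r$, which is admissible precisely on the conjectured range $\alpha\in(0,r-1]$, is the clean idea that makes the whole thing work. The endgame ($C\ge1$ from $\xi\ge r\ge(r!)^{1/r}$, whence $y_0<1$; then $M\le r$ from $G_\xi^r\le g_r^r$) is routine.

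It is worth noting that your pointwise method also re-proves the lower bound of Theorem~\ref{thm:(1+alpha)Bound} for $\alpha\in(0,1]$ in a substantially simpler way than the paper's integral argument. The paper integrates $(g_r^r-G_\xi^r)/(1-x)^{2+\alpha}$ over $[0,1]$, computes both sides via beta-function identities and Gautschi's gamma inequality, and then patches the endpoint $\alpha=1$, $r=2$ separately via Theorem~\ref{thm:2ndMomentBound}. Your approach replaces all of this by a single evaluation at $y_0$, avoids any special-casing, and yields an explicit constant valid on the full range. What the integral method buys, by contrast, is somewhat sharper implicit constants in the regime $\alpha<1$ (the beta-function computation is tighter than the crude $\binom{k}{r}\le k^r/r!$), but your argument is both more general and more transparent.
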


In Theorems \ref{thm:(1+alpha)Bound} and \ref{thm:2ndMomentBound}, we give upper bounds on $p_c(T_{\xi},r)$ based on the $\left ( \frac{r}{r-1} \right )$-th negative moments of $\xi$. However, the example of the $\xi_{b,a}$ offspring distribution in Theorem \ref{thm:GW_2a} immediately shows that negative moments are not enough to tightly bound the critical probability from above. This motivates the following question.
\begin{question}
 What other characteristics of the distribution $\xi$ lead to upper bounds on $p_c(T_\xi, r)$?
\end{question}

\bibliographystyle{amsplain}

 \bibliography{mylargebib}
 
\end{document}